\newtheorem{theorem}{Theorem}
\newtheorem{proposition}[theorem]{Proposition}
\newtheorem{lemma}[theorem]{Lemma}
\theoremstyle{definition}
\newtheorem{definition}[theorem]{Definition}
\theoremstyle{remark}
\newtheorem{remark}[theorem]{Remark}
\newtheorem{example}[theorem]{Example}
\def\R{\mathbb{R}}
\def\N{\mathbb{N}}
\def \e{\varepsilon}
\def\pscal#1#2{\left\langle#1,\,#2\right\rangle}
\def\blambda{\boldsymbol{t}}
\def\eigen{\overline{\lambda}}
\def\ubar{\overline{u}}
\def\osubjet{J^{2, -}_{\Omega}}
\def\osuperjet{J^{2, +}_{\Omega}}
\newcommand*{\infconv}[2][k]{{(#2_0\sharp\cdots\sharp #2_{#1})}_{\blambda}}
\newcommand*{\clset}{\mathcal{A}^n}
\DeclareMathOperator{\dist}{dist}
\begin{document}

\title[The Brunn--Minkowski inequality]%
{The Brunn--Minkowski inequality\\
for the principal eigenvalue of\\
fully nonlinear homogeneous elliptic operators}%
\author[G.~Crasta, I.~Fragal\`a]{Graziano Crasta,  Ilaria Fragal\`a}
\address[Graziano Crasta]{Dipartimento di Matematica ``G.\ Castelnuovo'', Univ.\ di Roma I\\
P.le A.\ Moro 5 -- 00185 Roma (Italy)}
\email{crasta@mat.uniroma1.it}

\address[Ilaria Fragal\`a]{
Dipartimento di Matematica, Politecnico\\
Piazza Leonardo da Vinci, 32 --20133 Milano (Italy)
}
\email{ilaria.fragala@polimi.it}

\keywords{Brunn-Minkowski inequality, viscosity solutions, eigenvalue problem, fully non-linear PDEs}

\subjclass[2010]{49K20, 35J60, 47J10, 26A51, 52A20, 39B62}

\date{July 28, 2019}

\begin{abstract} 
We prove that the principal eigenvalue of any fully nonlinear homogeneous elliptic operator which fulfills a very simple convexity assumption  satisfies a Brunn-Minkowski type inequality on the class of open bounded sets in $\mathbb{R}^n$ satisfying a uniform exterior sphere condition. 
In particular the result applies to the (possibly normalized) $p$-Laplacian, and to the minimal Pucci operator. The  proof is inspired by the approach introduced by Colesanti for the principal frequency of the Laplacian within the class of convex domains, and relies on a generalization of the convex envelope method by Alvarez-Lasry-Lions.
We also deal with the existence and log-concavity of positive viscosity eigenfunctions. 
\end{abstract}

\maketitle

\section{Introduction}
In its classical formulation, the Brunn-Minkowski inequality
states that the volume functional, raised to the power $1/n$, is concave on the class $\mathcal K ^n$ of convex bodies in the $n$-dimensional Euclidean space. 
Specifically, for every pair
$K_0, K _1$  of nonempty convex compact subsets of $\R^n$ and every $t \in [0,1]$,
denoting by $( 1-t) K _0 + t K _1$  the set of points of the form $(1-t)x+ t y$ for $x \in K _0$ and $y \in K _1$, and by $V(\cdot)$ the $n$-dimensional Lebesgue measure,
it holds
\begin{equation}
\label{f:classic}
V ^ {1/n} \big ( ( 1-t) K _0 + t K _1\big ) \geq (1-t) V ^ {1/n}(K_0) + t V ^ {1/n} (K_1)\, ,
\end{equation}
with equality sign if and only if $K _0$ and $K _1 $ are homothetic.

Named after Brunn, who firstly proved it in dimension $2$ and $3$ \cite{brunn1, brunn2}, and Minkowski, who  shortly afterwards gave a full analytic proof in $n$-dimensions and characterized the equality case \cite{mink}, in the last century this fundamental inequality has been proved and generalized in many different ways by an impressive list of mathematicians, including Hilbert \cite{hilbert}, Bonnesen \cite{bonn}, Kneser-Suss \cite{knesuss}, Blaschke \cite{blaschke},  Hadwiger \cite{had}, Knothe \cite{knothe}, Dinghas \cite{ding}, MacCann \cite{mccann}, McMullen \cite{mullen}, Ball \cite{ball},  Klain \cite{klain}.

It is not conceivable to give here an idea about the impact of  Brunn-Minkowski inequality in both Analysis and Geometry, and in their interplay. We limit ourselves to refer to Chapter 7  in the treatise \cite{Sch} by Schneider, which includes a lot of historical and bibliographical notes, and to the excellent survey paper \cite{gard} by Gardner, from which we quote:
``In a sea of mathematics, the Brunn-Minkowski inequality appears like an octopus, tentacles reaching far and wide, its shape and color changing as it roams from one area to the next.''

Aim of this paper is to reveal a new tentacle of this fascinating creature, which gets as far as the viscosity theory of  nonlinear PDEs, by proving the validity of a Brunn-Minkowski  type inequality for the principal frequency of fully non-linear homogeneous elliptic operators. 

As a starting point to introduce our results, we recall that Brunn-Minkowski inequality has been generalized, in a suitable form, to several functionals other than volume. They include not only geometric quantities (such as quermassintegrals \cite[Section 7.4]{Sch}), but also some energies from physics and calculus of variations. 
To be more precise, following \cite{CoCuSa}, we say that a functional $\Phi$ which is invariant under rigid motions and homogeneous of degree $\gamma \neq 0$  on $\mathcal K ^n$ satisfies a Brunn-Minkowski type inequality if,  by analogy to \eqref{f:classic}, it holds
\begin{equation}\label{f:variational}
\Phi ^ {1/\gamma } \big ( ( 1-t) K _0 + t K _1\big ) \geq (1-t) \Phi  ^ {1/\gamma }(K_0) + t \Phi  ^ {1/\gamma } (K_1)\, .
\end{equation}
The most significant choices of functionals $\Phi$ for which the above inequality has been proved are:
the principal frequency of the Laplacian (see Brascamp-Lieb \cite{BrLi}),
the torsional rigidity (see Borell \cite{bor1}),
the Newtonian capacity (see Borell \cite{bor2} and Caffarelli-Jerison-Lieb \cite{CJL}),
the logarithmic capacity  and a $n$-dimensional version of it (see Borell \cite{bor3} and Colesanti-Cuoghi \cite{CoCu}),
the $p$-capacity (see Colesanti-Salani \cite{CoSa}), the first eigenvalue of the $p$-Laplacian and the $p$-torsional rigidity  (see Colesanti-Cuoghi-Salani \cite{CoCuSa}),  the first eigenvalue of the Monge-Amp\`ere operator (see Salani \cite{Sal1}), 
the Bernoulli constant (see Bianchini-Salani \cite{BS09}),
the Hessian eigenvalue in three-dimensional convex domains (see Liu-Ma-Xu \cite{LiMaXu}),  functionals related to Hessian equations (see Salani \cite{Sal2}).
For large part of these results, a nice account can be found in the paper \cite{Col2005} by Colesanti.

Regarding this spectrum of extensions of the Brunn-Minkowski inequality, we wish to draw attention on  the class of domains where the inequality is known to work. Actually, the validity of inequality \eqref{f:classic}  for the volume functional goes far beyond the class of convex bodies: it has been extended to all measurable sets; a short and elegant proof due to Hadwiger-Ohmann \cite{hadoh} can be found in the above mentioned survey paper by Gardner.  
In spite, to our knowledge,
for all the functionals $\Phi$ mentioned above
the validity of inequality \eqref{f:variational} has been established only within convex bodies, exception made for the first eigenvalue of the Laplacian
and the torsional rigidity, for which the inequality is known to hold for all open bounded domains with sufficiently regular boundary.

It is now time to present the new family of Brunn-Minkowski type inequalities we obtain in this paper.
Given an open bounded domain $\Omega$ in $\R ^n$, 
we consider the following  eigenvalue problem for a fully nonlinear, degenerate elliptic,  homogeneous operator:
\begin{equation}\label{f:pb} 
\begin{cases}
 F(\nabla  u, D ^ 2 u ) = \lambda  |u|^{\alpha} u & \text{ in } \Omega
 \\  \noalign{\medskip}
 u = 0 & \text{ on } \partial \Omega\,.
 \end{cases}
 \end{equation}
Here $F\colon(\R^n\setminus\{0\})\times S^n\to\R$ 
is a continuous function satisfying,  for every  $\xi \in \R^n \setminus \{ 0 \}$ and $X$ in the space $S^n$  of symmetric real matrices,  the following conditions:
  \begin{itemize}
 \item[(H1)] \textit{Homogeneity}: for some $\alpha > -1$ and every  $(t, \mu) \in (\R \setminus \{ 0 \} )\times \R$,
  \[
F (t\xi, \mu X) = |t| ^ \alpha \mu F ( \xi, X);
\]
  \item[(H2)] \textit{Uniform ellipticity}: for some $C\geq c>0$ and every  $Y$ in the space $S ^n _+$ of positive semidefinite  symmetric matrices,
 $$c |\xi| ^ \alpha {\rm tr} (Y) \leq F (\xi, X) - F (\xi, X+Y) \leq C |\xi|^ \alpha {\rm tr} (Y).$$
\end{itemize}

For any operator satisfying (H1)-(H2), inspired by the celebrated work by Berestycki, Nirenberg and Varadhan \cite{BNV},  Birindelli and Demengel introduced in \cite{BiDe2006} the  principal eigenvalue $\overline \lambda(\Omega)$ as
$$\overline \lambda (\Omega):= \sup \Big \{ \lambda \in \R \ :\ \exists u >0 \text{ in } \Omega \text{ viscosity super-solution to the pde in } \eqref{f:pb}\Big \} \,; $$
here the notion of viscosity super-solution has to be meant as specified in Section~\ref{sec:lambda}  below.

The bibliography related to the eigenvalue problem for fully nonlinear second order operators is very wide. With no attempt of completeness, we limit ourselves to quote
Birindelli-Demengel for many related works including \cite{BiDe2004,BiDe2007,BiDe2007b,BirDem2010}, Ikoma-Ishii for the computation of eigenvalues on balls \cite{II12, II15}, Armstrong \cite{arm}, 
Berestycki-Capuzzo Dolcetta-Porretta-Rossi \cite{BCPR}, and Quaas-Sirakov \cite{QS} for related maximum principles, Berestycki-Rossi for the case of unbounded domains \cite{BeRo}, Kawohl and different coauthors for the case of the game theoretic $p$-Laplacian \cite{K0,KH,KKK,K11,BK18}  (see also our recent joint work \cite{CFK}),   Juutinen for the case of the normalized infinity Laplacian \cite{Juut07}, Busca-Esteban-Quaas for the case of Pucci operators \cite{BEQ}.

As far as we know, there is no previous attempt to prove that 
the Brunn-Minkowski inequality holds true for the principal eigenvalue of a fully nonlinear operator. Our main result states that this is indeed the case as soon as the operator  enjoys, besides (H1)-(H2), the following condition

  \begin{itemize}
   \item[(H3)] {\it Convexity:} for every $\xi \in \R ^n \setminus \{ 0 \}$,
\[
X \mapsto F(\xi, X) \text{ is convex on } S ^n \,,
\]
 \end{itemize}
and the involved domains belong to the class
\begin{equation}
\label{f:sets}
\clset := \left\{ 
\begin{matrix}
\text{open bounded connected Lipschitz domains of $\R^n$} 
\\ 
\text{satisfying a uniform exterior sphere condition}
\end{matrix}
\right\}\,.
\end{equation}
We remark that this class is closed with respect to
the Minkowski addition of sets.

\begin{theorem}[Brunn-Minkowski inequality]\label{t:BM}
If $F$ satisfies conditions (H1)-(H2)-(H3), 
for every pair of domains $\Omega_0, \Omega_1\in\clset$,
and every $t\in [0,1]$, it holds
\begin{equation}\label{f:BMo}
\eigen \big ((1-t) \Omega _0 + t \Omega _1 \big )^{-1/(\alpha+2)}
\geq
(1-t)\eigen(\Omega_0)^{-1/(\alpha+2)} + t\, \eigen(\Omega_1)^{-1/(\alpha+2)}\, .
\end{equation}
\end{theorem}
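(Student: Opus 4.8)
The plan is to follow the scheme pioneered by Colesanti for the principal frequency of the Laplacian and adapted to the $p$-Laplacian by Colesanti--Cuoghi--Salani, replacing the variational characterization of the eigenvalue (which is no longer available in the fully nonlinear setting) by the max--min formula for $\eigen(\Omega)$ coming from \cite{BNV,BiDe2006}, and replacing the pointwise algebra of the convex-combination test function by the convex envelope machinery of Alvarez--Lasry--Lions (as generalized earlier in the paper). Concretely, write $\Omega_t := (1-t)\Omega_0 + t\Omega_1$. By scaling and the homogeneity (H1) it is convenient to normalize: if $u_i>0$ solves the eigenvalue equation on $\Omega_i$ with eigenvalue $\eigen(\Omega_i)$, I would rescale $u_i$ (using that the equation is homogeneous of degree $\alpha+1$ in $u$ and degree $\alpha+2$ in the spatial dilation) so that the scaling factors $s_i := \eigen(\Omega_i)^{-1/(\alpha+2)}$ play the role of the Minkowski weights. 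Set $\mu := (1-t)s_0 + t s_1$; the claim \eqref{f:BMo} is precisely that $\eigen(\Omega_t) \le \mu^{-(\alpha+2)}$, i.e.\ that $\mu^{\alpha+2}$ is an admissible $\lambda$ in the definition of $\eigen(\Omega_t)$, which by definition requires producing a positive viscosity supersolution of $F(\nabla w, D^2 w) = \mu^{\alpha+2} |w|^\alpha w$ on $\Omega_t$.

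The candidate supersolution is the \emph{sup-convolution-type Minkowski combination} of the $u_i$: following Alvarez--Lasry--Lions and its generalization recalled above, define on $\Omega_t$
\[
w(x) := \sup\Bigl\{\, \bigl((1-\sigma)s_0 u_0(y_0)^{-\beta} + \sigma s_1 u_1(y_1)^{-\beta}\bigr)^{-1/\beta} \ :\ (1-\sigma)y_0 + \sigma y_1 = x,\ y_i\in\Omega_i \,\Bigr\}
\]
for the appropriate choice of parameters ($\sigma$ tied to $t$ through the $s_i$, and $\beta>0$ chosen so that the homogeneity exponents match — this is where the exponent $1/(\alpha+2)$ enters). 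The heart of the argument is then the viscosity inequality: at a point where $w$ is touched from below by a smooth test function $\varphi$, one produces touching points/test functions for $u_0$ and $u_1$ at the corresponding $y_0,y_1$, uses that each $u_i$ is a supersolution, and combines the two inequalities. Here conditions (H2) (uniform ellipticity, to control the second-order terms through the infimal convolution of Hessians) and crucially (H3) (convexity of $X\mapsto F(\xi,X)$, to pass the combination of the two matrix inequalities through $F$) are exactly what make the combination work; (H1) is used throughout to track the scaling factors. The output is that $w$ is a viscosity supersolution with eigenvalue $\mu^{\alpha+2}$, and since $w>0$ on $\Omega_t$, the definition of $\eigen(\Omega_t)$ as a supremum gives $\eigen(\Omega_t)\le \mu^{\alpha+2}$, which rearranges to \eqref{f:BMo}.

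Two technical points need care. First, the objects $u_i$ must genuinely exist as positive viscosity eigenfunctions and have enough regularity (continuity up to the boundary, the right decay near $\partial\Omega_i$) for the sup-convolution to be well-defined and to vanish on $\partial\Omega_t$; this is why the domain class $\clset$ with its uniform exterior sphere condition is imposed — it guarantees the barrier/regularity statements (existence and behavior of eigenfunctions) established earlier in the paper, and it is also closed under Minkowski addition so that $\Omega_t\in\clset$. Second, one must verify that the supremum defining $w$ is attained (using compactness of $\overline{\Omega_i}$ and upper semicontinuity of the integrand, again leaning on boundary behavior of the $u_i$) and that $w$ is, say, upper semicontinuous and strictly positive inside $\Omega_t$. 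The main obstacle is the viscosity combination step: unlike the divergence-form / variational case, there is no energy to exploit, so one must carefully run the Alvarez--Lasry--Lions convex-envelope argument in the fully nonlinear, possibly degenerate setting, matching the homogeneity degree $\alpha$ in the gradient slot with the dilation degree $\alpha+2$, and ensuring the argument survives at boundary touching points. I would isolate this as a separate lemma (the "generalized convex envelope method") and then the proof of Theorem~\ref{t:BM} becomes the short bookkeeping of scalings described above.
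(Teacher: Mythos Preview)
Your proposal has the sub/super direction reversed, and this is a genuine gap rather than a cosmetic slip. From the definition $\eigen(\Omega)=\sup\{\lambda:\exists\,u>0\text{ viscosity super-solution at level }\lambda\}$, exhibiting a positive viscosity \emph{super}-solution on $\Omega_t$ at some level $\tau$ yields the \emph{lower} bound $\eigen(\Omega_t)\ge\tau$, not the upper bound you need; your sentence ``the definition of $\eigen(\Omega_t)$ as a supremum gives $\eigen(\Omega_t)\le\mu^{\alpha+2}$'' has the inequality the wrong way round (and this is independent of whether you meant $\mu^{\alpha+2}$ or $\mu^{-(\alpha+2)}$). The paper's route to an \emph{upper} bound is indirect: one constructs a positive viscosity \emph{sub}-solution $\bar u$ on $\Omega_t$, with $\bar u=0$ on $\partial\Omega_t$, at level $\tau=(1-t)\eigen(\Omega_0)+t\,\eigen(\Omega_1)$, and then invokes the Birindelli--Demengel maximum principle (Theorem~\ref{t:birdem}): were $\tau<\eigen(\Omega_t)$, one would be forced to $\bar u\le 0$, contradicting $\bar u>0$. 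Hence $\eigen(\Omega_t)\le\tau$.

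Concretely, the paper first reduces via the scaling law $\eigen(k\Omega)=k^{-(\alpha+2)}\eigen(\Omega)$ to the additive inequality $\eigen(\Omega_t)\le(1-t)\eigen(\Omega_0)+t\,\eigen(\Omega_1)$, which eliminates your $s_i,\mu,\beta$ bookkeeping. It then passes to $v_i:=-\log u_i$, which by Lemma~\ref{l:log} are super-solutions of $G(\nabla v,D^2v)=-\eigen(\Omega_i)$ with $v_i\to+\infty$ on $\partial\Omega_i$; forms the weighted \emph{infimal} convolution $w=(v_0\sharp v_1)_{(1-t,t)}$; shows via Proposition~\ref{p:concave} (this is where the concavity of $G$ in $X$, i.e.\ (H3), enters) that $w$ is a super-solution of $G$ at level $-\tau$ on $\Omega_t$; and finally sets $\bar u=e^{-w}$, which by Lemma~\ref{l:log} again is the positive \emph{sub}-solution fed into the maximum principle. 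Your sup-type combination of the $u_i$ is, after taking $-\log$, exactly an infimal convolution of the $v_i$, so the object you wrote down is essentially the right one; but the Alvarez--Lasry--Lions step (touching from below, concavity of $G$) produces a super-solution for $G$, hence a \emph{sub}-solution for $F$, and its role in the argument is via the maximum principle, not via the sup-definition of $\eigen$.
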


\bigskip
We emphasize that 
the class $\clset$ contains all bounded open sets 
which are convex or of class $C^2$, but
domains in $\clset$ do not need to be convex, nor of class $C ^ 2$. 
In particular, for the first eigenvalue of the $p$-Laplacian, Theorem \ref{t:BM} extends  to 
domains in $\clset$ the Brunn-Minkowski inequality proved for $C^2$ convex bodies by Colesanti-Cuoghi-Salani \cite{CoCuSa}. 
Besides the $p$-Laplacian, a list of further relevant operators fitting the assumptions of Theorem \ref{t:BM} is postponed at the end of this section. 

\smallskip 
The reason why we work on  the class $\clset$ is that, for such domains, 
we are able to prove the existence of positive viscosity eigenfunctions, until now known only  for $C ^ 2$ domains
(see \cite{BiDe2006,BiDe2007b}). 
 This  side result, which may have its own interest, is given in 
Section~\ref{ss:existence} (see Theorem \ref{t:approx}): it 
is derived as a by-product of a global H\"older estimate (see Proposition \ref{p:holder}), which in turn is obtained
via a barrier argument, adapted from Birindelli-Demengel, involving the distance from the boundary.

\smallskip

Our approach to obtain Theorem~\ref{t:BM} can be synthetically  defined as a synergy between the method  introduced by Colesanti in \cite{Col2005} to obtain the Brunn-Minkowski inequality for the first eigenvalue of the Laplacian for convex domains, and the method introduced by Alvarez, Lasry and Lions in \cite{ALL} to obtain the convexity of viscosity solutions to  second order fully nonlinear elliptic equations with state constraint boundary conditions.
We also point out that in the paper \cite{IS16} parabolic problems are considered under a close perspective, working on possibly non-convex domains, yet still with classical solutions; more specifically, using Lemma 3.1 in \cite{IS16} it can be realized that the general theory previously developed by Salani  in \cite{SalAIHP}  (covering for instance the case of the Pucci operator) can be extended to non-convex domains.

Roughly speaking, the proof of Theorem \ref{t:BM}  goes as follows.  
The key point is that, in order to
prove  the inequality \eqref{f:variational} for $\Phi (\cdot) = \overline \lambda (\cdot)$, it is enough to construct a sub-solution to the corresponding eigenvalue problem on the domain $(1-t) \Omega _0 + t \Omega _1$.
In case of the Laplacian, this assertion relies on the variational characterization of the eigenvalue as minimum of the
Rayleigh quotient. In our fully nonlinear setting, though there is no variational interpretation of the eigenvalue,
the same principle remains true thanks to a maximum principle proved by Birindelli-Demengel (see Theorem \ref{t:birdem} below).
 Then the next step is how to construct a sub-solution.  
 To that aim the idea  is to look at the transformed equation satisfied by (minus) the logarithms of the eigenfunctions (which on convex domains are known to be convex functions \cite{BrLi, CaSp}), consider (minus) the infimal convolution between these logarithms, and take its exponential.
 In case of the Laplacian, the function thus obtained  turns out to be a sub-solution essentially  because the infimal convolution linearizes the Fenchel transform, and the map $M \mapsto {\rm tr} ( M ^ {-1})$ is convex on the family of positive definite matrices.
In our fully non-linear setting, we still consider the function constructed in the same way, but in order to show that it is a sub-solution  
we  have to adopt a different procedure.  Indeed, since we do not have enough regularity information on the eigenfunctions,
we cannot write pointwise Hessians; moreover, since
we want to get rid of the convexity assumptions on the domains, we cannot
exploit the log-concavity of eigenfunctions.  To overcome these difficulties, we set up a generalization of
the method introduced by Alvarez-Lasry-Lions  in order to show that the convex envelope is a sub-solution, the difference being that we work with a family of {\it distinct} functions on {\it distinct, possibly non-convex}, domains (compare Propositions~\ref{p:ALL0} and~\ref{p:concave} below  respectively with  Propositions~1 and~3 in \cite{ALL}).
We remark that similar techniques have been used in the above mentioned
paper \cite{SalAIHP} by Salani, where the author has introduced
a very general theory for Brunn-Minkowski
inequalities for functionals related to elliptic PDEs,
for a very general class of nonlinear operators.
Yet, the effective applicability of the results in \cite{SalAIHP}
is limited by the fact that only classical solutions are considered.

\smallskip
Let us point out that at present we are not able to push over our viscosity approach in order to deal with the equality case in Theorem~\ref{t:BM}. We address such characterization as an interesting open problem, which seems to be quite delicate. Actually, for a lot of Brunn-Minkowski type inequalities,  the characterization of the equality case is 
still open, especially when dealing with non-convex domains.
The case of the first eigenvalue of the Laplacian is emblematic in this respect:
since Brascamp-Lieb \cite{BrLi}, the inequality \eqref{f:variational} is known to hold for all compact, connected domains having sufficiently regular boundary, but the equality case has been settled only
forty years later by Colesanti \cite{Col2005}, and his approach works  just for convex domains.

\smallskip
On the other hand, as a companion result to Theorem \ref{t:BM}, we are able to establish the 
log-concavity of positive viscosity eigenfunctions. 
As well as in Theorem~\ref{t:BM},  
we need as a key assumption the convexity of $F$ in its second variable. However, for technical reasons which will be explained during the proof, 
here it is needed in the following stronger form:

\begin{itemize}
\item[(H3)'] {\it Reinforced convexity:}  $F$ is of class $C^2$, and for every $\delta >0$ there exists a positive constant $c_0$ such that
\[
\nabla_{X}^2 F (\xi, X) M \cdot M \geq c_0 |M| ^2 \qquad  \forall M, X \in S^n\,, \ \forall \xi \in \R^n \text{ with } |\xi| > \delta\,.
\]
\end{itemize}

\begin{theorem}[log-concavity of eigenfunctions]\label{t:log}
Assume that $F$ satisfies conditions (H1)-(H2)-(H3)'. 
Then:
\begin{itemize}
\item[(i)] if $\Omega$ is a strongly convex bounded open set of class $C^{2, \beta}$ for some $\beta\in (0,1)$, then any positive viscosity eigenfunction is log-concave;
\item[(ii)] 
if $\Omega$ is a convex bounded open set,
then there exists a positive viscosity eigenfunction which is log-concave. 
\end{itemize}
\end{theorem}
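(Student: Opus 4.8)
The plan is to follow the same two-step strategy as for Theorem~\ref{t:BM}, but in the degenerate case where the two domains $\Omega_0$ and $\Omega_1$ coincide with $\Omega$ and $t=1/2$ (the diagonal case). For part~(i), let $u>0$ be a positive viscosity eigenfunction on a strongly convex $C^{2,\beta}$ domain $\Omega$, associated with $\eigen(\Omega)$, and set $v:=-\log u$, so that $u=e^{-v}$ and $\nabla^2 u = (\nabla v\otimes\nabla v - \nabla^2 v)e^{-v}$. Writing the transformed equation formally, $v$ solves a fully nonlinear equation of the form $G(\nabla v, \nabla^2 v)=\eigen$ in $\Omega$, with $v=+\infty$ on $\partial\Omega$, where $G(\xi,X):=-F(-\xi,\xi\otimes\xi-X)$ (using homogeneity (H1) in the first variable and the sign of $u$). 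The goal is to show $v$ is convex. The natural device is the two-point concavity function: consider $w(x,y):=v\big(\tfrac{x+y}{2}\big)-\tfrac12 v(x)-\tfrac12 v(y)$ on $\overline\Omega\times\overline\Omega$ and show $w\le 0$. Equivalently, in the spirit of Alvarez--Lasry--Lions and Proposition~\ref{p:concave}, one shows that the midpoint infimal convolution $\tilde v(z):=\inf\{\tfrac12 v(x)+\tfrac12 v(y): \tfrac{x+y}{2}=z\}$ is a viscosity supersolution of the same equation $G(\nabla \tilde v, \nabla^2\tilde v)\ge \eigen$; since $\tilde v\le v$ (take $x=y=z$) and $\tilde v=+\infty$ on $\partial\Omega$, comparison (Theorem~\ref{t:birdem}) forces $\tilde v\ge v$, hence $\tilde v=v$, which is exactly convexity of $v$, i.e. log-concavity of $u$.

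The key technical steps are: (a) justify that $v$ is a viscosity solution of the transformed equation — here one must use the strict positivity of $u$ (so that $\log u$ is well-defined and the map $u\mapsto -\log u$ is a smooth increasing change of variable, which transforms viscosity sub/supersolutions correctly), together with the global boundary behavior $v\to+\infty$ at $\partial\Omega$, which is where the $C^{2,\beta}$ strong convexity and the Hölder/barrier estimates of Proposition~\ref{p:holder} and Theorem~\ref{t:approx} enter to control $u$ near $\partial\Omega$; (b) run the generalized convex-envelope argument of Proposition~\ref{p:concave} with the single operator $G$ on the single convex domain $\Omega$, testing $\tilde v$ at a minimum point of $\tilde v-\varphi$ against a test function, writing the minimum as $\tfrac12 v(x_0)+\tfrac12 v(y_0)$ with $\tfrac{x_0+y_0}{2}=z_0$, and propagating the test function to $v$ at $x_0$ and $y_0$; (c) combine the two subjet/superjet inequalities for $v$ at $x_0,y_0$ using the convexity (H3)$'$ of $F$ in $X$ and the structure of $G$. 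The reinforced convexity (H3)$'$ is needed precisely at step~(c): because one must add two second-order inequalities of the form $F(\xi_i,X_i)\ge\ldots$ with, in general, $\xi_0\neq\xi_1$ at $x_0\neq y_0$, plain convexity of $X\mapsto F(\xi,X)$ for fixed $\xi$ is insufficient; the uniform positive-definiteness of $\nabla^2_X F$ provides the quantitative slack (via a Jensen/second-order Taylor argument on the combined Hessians) that absorbs the mismatch, after one first checks, using the strict positivity of $u$, that the relevant gradients $\xi_0,\xi_1$ stay in a region $|\xi|>\delta$ where (H3)$'$ applies.

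For part~(ii), one drops the regularity of $\Omega$ by approximation: take a decreasing sequence of strongly convex $C^{2,\beta}$ domains $\Omega_j\searrow\Omega$ (or increasing, depending on the monotonicity of $\eigen$), apply part~(i) to get log-concave positive eigenfunctions $u_j$ on $\Omega_j$, normalize them (say $\|u_j\|_\infty=1$), and pass to the limit. The global Hölder estimate of Proposition~\ref{p:holder} gives equicontinuity up to the boundary, so by Ascoli one extracts a uniform limit $u\ge 0$ which is log-concave (a locally uniform limit of log-concave functions is log-concave), is positive somewhere, and — by stability of viscosity solutions and convergence $\eigen(\Omega_j)\to\eigen(\Omega)$ (the continuity of the principal eigenvalue under such approximations, itself a consequence of the barrier construction in Section~\ref{ss:existence}) — is a viscosity eigenfunction on $\Omega$; log-concavity plus the boundary condition then upgrade "positive somewhere" to "positive in $\Omega$".

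The step I expect to be the main obstacle is~(c): reconciling the two second-order viscosity inequalities for $v$ at the two distinct points $x_0\neq y_0$ with possibly distinct gradients. Everything in the generalized Alvarez--Lasry--Lions machinery up to producing the subjets is, by Propositions~\ref{p:ALL0} and~\ref{p:concave}, essentially available; the delicate algebra is showing that convexity of $F$ in $X$ — quantified by (H3)$'$ — is exactly what is required to pass from "$\tfrac12(\text{inequality at }x_0)+\tfrac12(\text{inequality at }y_0)$" to the desired inequality for $\tilde v$ at $z_0$, while simultaneously keeping the gradients bounded away from $0$ so that (H3)$'$ is usable. A secondary, more routine obstacle is the rigorous justification that the logarithmic change of variables commutes with the viscosity notion globally, including the correct handling of the infinite boundary datum for $v$.
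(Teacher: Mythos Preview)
Your plan for part~(ii) is essentially the paper's own argument. For part~(i), however, there are two genuine gaps.

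First, the comparison step does not go through as written. You propose to show that the midpoint infimal convolution $\tilde v$ is a supersolution of the transformed equation and that $\tilde v\le v$, and then invoke Theorem~\ref{t:birdem} to conclude $\tilde v\ge v$. But Theorem~\ref{t:birdem} concerns the original eigenvalue equation and requires $\tau<\eigen(\Omega)$; here you are exactly at $\tau=\eigen(\Omega)$, so it gives nothing. Going back to $u$-variables, $e^{-\tilde v}$ is a positive subsolution at the critical level, and no conclusion follows. More fundamentally, knowing only that $v\to+\infty$ and $\tilde v\to+\infty$ on $\partial\Omega$ does not allow you to compare them: both blow up, possibly at different rates. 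The paper resolves this by working with the full convex envelope $v_{**}$ (not just the midpoint convolution), restricting to a sublevel set $\Omega_\varepsilon=\{u>\varepsilon\}$ where the boundary values are finite, and applying a comparison theorem (Lu--Wang) there. For that, one must check $v_{**}=v$ on $\partial\Omega_\varepsilon$, which holds by a lemma of Alvarez--Lasry--Lions \emph{provided $\Omega_\varepsilon$ is convex}.

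Second, you have misidentified where (H3)$'$ is needed. In the infimal-convolution step, Proposition~\ref{p:ALL0} (via the Crandall--Ishii--Lions maximum principle) gives the \emph{same} gradient $\xi$ at all the points $x_i$; there is no mismatch $\xi_0\neq\xi_1$ to absorb, and plain (H3) already suffices for Proposition~\ref{p:concave}. The reinforced hypothesis (H3)$'$ is used instead to obtain $C^2$ regularity of $u$ in a neighbourhood of $\partial\Omega$ (through the estimates of Gilbarg--Trudinger, which require uniform convexity of $F$ in $X$). That regularity, combined with strong convexity of $\Omega$, Hopf's lemma, and a level-set lemma of Korevaar/Sakaguchi, is what yields convexity of $\Omega_\varepsilon$ for small $\varepsilon$, closing the boundary-matching issue above. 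So the real obstacle is not step~(c) in your outline but the boundary comparison, and the strong convexity and $C^{2,\beta}$ hypotheses are there precisely to make it work.
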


The above theorem can be read as an extension to viscosity solutions
of general fully  nonlinear operators 
of the result proved by Sakaguchi  in  \cite{Sak}  for the $p$-Laplacian  
(see also \cite{kuhn})
and by Bianchini and Salani in \cite{BS13}
for a general class of operators including the ones considered here.
Part (i) of the statement 
 is obtained essentially via the convex envelope method of Alvarez-Lasry-Lions, 
 whereas, for part (ii), we use our afore mentioned existence result (Theorem~\ref{t:approx}), which 
 involves an approximation argument  with smooth domains. In particular, the  fact that an approximation procedure is needed explains why part (ii) of the statement is formulated for \textit{some} (not for \textit{any}) positive viscosity eigenfunction. 
Clearly, in case the eigenvalue is  simple, also for 
$\Omega$ as in (ii)
any  positive viscosity solution is log-concave. This is for instance the case of 
the $p$-Laplacian \cite{Sak} and of the normalized $p$-Laplacian \cite{CFK}.

\smallskip
 We conclude this Introduction by providing a short list of some relevant operators to which the results stated above apply. 
\begin{example}  
The following operators satisfy assumptions (H1)-(H2)-(H3). 
Moreover, 
all of them satisfy also assumption (H3)' 
(the corresponding function $F$ being linear in $X$), 
except for the minimal Pucci operator, which however satisfies 
assumption (H3) (see \cite[Lemma 2.10]{CC}).

\begin{itemize}
\item{} {\it The $p$-Laplacian, for $p> 1$:}
\[
\begin{array}{ll}
& \Delta _p u = {\rm div} \big ( |\nabla u | ^ {p-2} \nabla u )
\\  \noalign{\bigskip}
&
F (\xi, X) = - |\xi| ^ {p-2} {\rm tr} X - (p-2) |\xi| ^ {p-4} \langle X \xi, \xi \rangle\, , \quad \alpha = p-2
\end{array}
\]

\medskip
\item{} {\it The normalized $p$-Laplacian,  for $p > 1$:}
\[
\begin{array}{ll}
& \Delta _p^N  u = \frac{1}{p} |\nabla u| ^ {2-p} {\rm div} \big ( |\nabla u | ^ {p-2} \nabla u )
\\  \noalign{\bigskip}
&
F (\xi, X) = -\frac{1}{p}  {\rm tr} X - \frac{p-2}{p} |\xi| ^ {-2} \langle X \xi, \xi \rangle \, , \quad \alpha = 0
\end{array}
\]

\item{} {\it The minimal Pucci operator:}
\begin{align*}
&  \mathcal M _{\lambda, \Lambda} ( D^ 2 u) = \lambda \sum _{e_i>0}  e_i (D^2 u) + \Lambda \sum _{e_i <0} e_i  (D^2 u)\,,
\quad
0 < \lambda \leq \Lambda,
\\
& 
F (\xi, X) =   \lambda \sum _{e_i>0}  e_i (X) + \Lambda \sum _{e_i <0} e_i  (X)\, , \quad \alpha = 0 \quad (e_i (X)= \text{eigenvalues of } X).
\end{align*}
\end{itemize}
\end{example}

The remaining of the paper is organized as follows:
\begin{itemize}
\item[--] in Section \ref{sec:prel} we provide the intermediate results we need about viscosity solutions  and infimal convolutions;

\item[--]  in Section \ref{ss:existence} we prove the existence of eigenfunctions for domains in $\mathcal A ^n$; 

\item[--] in Section \ref{sec:proofs} we give the
proofs of Theorems \ref{t:BM} and \ref{t:log}. 
\end{itemize}

\section{Preliminary results}\label{sec:prel}

\subsection{Viscosity solutions and maximum principle}\label{sec:lambda}

Below we adopt the following standard notation:
if $u,\varphi$ are two real functions on $\Omega$ and
$x\in\Omega$, by writing
$
\varphi \prec_x u
$ (resp. $u \prec _x \varphi$),
we mean that {\it $\varphi$ touches $u$ from below (resp.\ from above)  at $x$}, that is $u(x) = \varphi(x)$ and $\varphi(y) \leq u(y)$ (resp. $u (y) \leq \varphi (y)$) for every $y\in\Omega$.
Moreover, we denote by $\osubjet u (x)$ (resp.\ $\osuperjet u (x)$) the {\it second order sub-jet} (resp.\ {\it super-jet})
of $u$
at $x$,  which is by definition the set of pairs
$(\xi , A) \in \R ^n \times S^n$ such that, as $y \to x,\ y\in \overline{\Omega}$, it holds
$$
 u (y) \geq  (\leq) \ u ( x) + \pscal{ \xi}{y- x}
+ \frac{1}{2} \pscal {A (y- x)}{y- x} + o ( |y - x|^2)
\,.
$$

For any $\lambda>0$, the notion of viscosity sub- and super-solutions to  the pde 
\[
F(\nabla  u, D ^ 2 u ) = \lambda  |u|^{\alpha} u
\]
can be intended according  Crandall-Ishii-Lions \cite{CIL} or according to
 Birindelli-Demengel \cite{BiDe2007b}, as formulated respectively in Definition \ref{d:CIL} and Definition \ref{d:BD}.  
For later use, we give these two definitions for the more
general equation
\begin{equation}\label{f:pdeg}
 F(\nabla  u, D ^ 2 u ) = g(u),
\end{equation}
where $g\colon\R\to\R$ is a continuous function.

\begin{definition}\label{d:CIL}
\smallskip
-- An upper semicontinuous function $u :\Omega \to \R$ is  a {\it viscosity sub-solution} to \eqref{f:pdeg}
if, for every $x\in\Omega$ and for every smooth function $\varphi$  such that
$u \prec_x \varphi$, denoting by $F_*$ the lower semicontinuous envelope of $F$, it holds
\[
F_* (\nabla \varphi(x), D ^ 2 \varphi(x)) \leq 
g(\varphi(x))
\]
(or equivalently 
$F_* (\xi, A)  \leq g(u(x))$
for every $(\xi, A) \in \osuperjet u (x)$).

\smallskip
-- A lower semicontinuous function $u :\Omega \to \R$ is  a {\it viscosity super-solution} to \eqref{f:pdeg} 
if, for every $x\in\Omega$ and for every smooth function $\varphi$  such that
$\varphi \prec_x u$,  denoting by $F^*$ the upper semicontinuous envelope of $F$, it holds
\[
F^* (\nabla \varphi(x), D ^ 2 \varphi(x)) \geq 
g(\varphi(x))
\]
(or equivalently 
$F^* (\xi, A)  \geq g(u(x))$
for every $(\xi, A) \in \osubjet u (x)$).

\smallskip
-- A continuous function $u\colon\Omega \to \R$ is a  {\it viscosity solution} to \eqref{f:pdeg}
in $\Omega$  if it is both a viscosity super-solution and a viscosity sub-solution.
\end{definition}

\begin{definition}\label{d:BD}

-- An upper semicontinuous function $u\colon \Omega \to \R$ is a 
{\it viscosity sub-solution} to  \eqref{f:pdeg}
if, for every $x\in\Omega$:

$\cdot)$ either $u$ is equal to a constant $c$ on an open ball $B_r(x) \subset \Omega$
and
$ 0 \leq g(c)$;  

$\cdot)$ or
for every
smooth function $\varphi$  such that
$u \prec_x \varphi$ with $\nabla\varphi(x)\neq 0$, it holds
\[
F (\nabla \varphi(x), D ^ 2 \varphi(x)) \leq 
g(\varphi(x))
\]
(or equivalently 
$F (\xi, A)  \leq g(u(x))$
for every $(\xi, A) \in \osuperjet u (x)$ with $\xi \neq 0$).

\smallskip
-- A lower semicontinuous function $u\colon\Omega \to \R$ is a {\it viscosity super-solution} of
\eqref{f:pdeg} if, for every $x\in\Omega$:

$\cdot)$ either $u$ is equal to a constant $c$ on an open ball $B_r(x) \subset \Omega$
and 
$0 \geq g(c)$;  

$\cdot)$  or
for every smooth function $\varphi$  such that
$\varphi \prec_x u$ with $\nabla\varphi(x)\neq 0$, it holds
\[
F (\nabla \varphi(x), D ^ 2 \varphi(x)) \geq 
g(\varphi(x))
\]
(or equivalently 
$F (\xi, A)  \geq g(u(x))$
for every $(\xi, A) \in \osubjet u (x)$ with $\xi \neq 0$).

\smallskip
-- A continuous function $u: \Omega \to \R$ is a {\it viscosity solution} to \eqref{f:pdeg} in $\Omega$  if it is both a viscosity supersolution and a viscosity subsolution.
\end{definition}

\medskip
The following equivalence lemma is adapted from \cite[Lemma 2.1]{DFQ10} and \cite[Proposition~2.4]{AtRu18}, and will be very useful in the sequel
({\it cf.}\ Remark \ref{r:kuhn}). 
For this result and the subsequent Theorem~\ref{t:birdem},
the uniform ellipticity condition (H2) can be replaced by the
much weaker \textit{degenerate ellipticity condition}:
\begin{itemize}
\item[(H2)']
$F(\xi, X) \geq F(\xi,Y)$ for every $\xi\in\R^n\setminus\{0\}$
and for every $X,Y\in S^n$, $X\leq Y$.
\end{itemize}

\medskip
\begin{lemma}\label{l:equivalence}  
For any operator $F$ satisfying (H2)' and
\begin{equation}\label{f:pos-env}
F^*(0,0) = F_*(0,0) = 0,
\end{equation}
and any continuous function $g$, 
Definitions~\ref{d:CIL} and~\ref{d:BD} are equivalent.
\end{lemma}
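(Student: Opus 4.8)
The plan is to reduce to the sub-solution case and then isolate the only real difference between the two definitions, which concerns test jets with vanishing gradient. For the reduction, set $\tilde F(\xi,X):=-F(-\xi,-X)$ and $\tilde g(s):=-g(-s)$; from (H2)' and \eqref{f:pos-env} one checks that $\tilde F$ again satisfies (H2)' and $\tilde F^*(0,0)=\tilde F_*(0,0)=0$, and that (for \emph{both} notions of solution) $u$ is a super-solution of $F(\nabla u,D^2u)=g(u)$ in $\Omega$ if and only if $-u$ is a sub-solution of $\tilde F(\nabla v,D^2v)=\tilde g(v)$ in $\Omega$, using $(\xi,A)\in\osubjet u(x)\iff(-\xi,-A)\in\osuperjet(-u)(x)$ together with the identity $\tilde F_*(\xi,X)=-F^*(-\xi,-X)$. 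So it is enough to compare the two sub-solution conditions at a fixed $x\in\Omega$. Since $F$ is continuous on $(\R^n\setminus\{0\})\times S^n$ we have $F_*=F^*=F$ there, hence both definitions impose exactly the same requirement on pairs $(\xi,A)\in\osuperjet u(x)$ with $\xi\neq0$; only the behaviour of $\xi=0$ jets and the ``$u$ locally constant'' clause remain to be matched.

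If $u\equiv c$ on a ball $B_r(x)\subset\Omega$, a direct computation gives $\osuperjet u(x)=\{0\}\times S^n_+$, and (H2)' together with \eqref{f:pos-env} yields $F_*(0,A)\le0$ for every $A\in S^n_+$ (pick $(\xi_k,A_k)\to(0,0)$ with $F(\xi_k,A_k)\to F_*(0,0)=0$ and use degenerate ellipticity through $A_k\le A_k+A$). Hence the Definition~\ref{d:CIL} sub-solution inequality at $x$, i.e.\ $F_*(0,A)\le g(c)$ for every $A\ge0$, reduces to its case $A=0$, namely $0\le g(c)$, which is exactly the clause in Definition~\ref{d:BD}; for super-solutions one argues symmetrically with $F^*(0,A)\ge0$ for $A\le0$.

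The substantive case is that of $x$ where $u$ is not locally constant. There the implication Definition~\ref{d:CIL}$\,\Rightarrow\,$Definition~\ref{d:BD} is immediate ($F_*=F$ for $\xi\neq0$), so the point is the reverse: given $(0,A)\in\osuperjet u(x)$ we must show $F_*(0,A)\le g(u(x))$. Fix $\e>0$ with $A+\e I$ invertible and set $\varphi_\e(y):=u(x)+\tfrac12\langle(A+\e I)(y-x),y-x\rangle+|y-x|^4$; then $\varphi_\e$ touches $u$ from above at $x$ on some $B_r(x)$, with $x$ a \emph{strict} maximum of $u-\varphi_\e$. For small $|p|$ let $x_p$ maximize $u-\varphi_\e-\langle p,\cdot-x\rangle$ over $\overline{B_r(x)}$; then $x_p\to x$, $u(x_p)\to u(x)$, $x_p$ is interior, and $\psi_p:=\varphi_\e+\langle p,\cdot-x\rangle+c_p$ (for the appropriate constant $c_p$) satisfies $\psi_p\ge u$ on \emph{all} of $B_r(x)$ with equality at $x_p$, so that $(\nabla\varphi_\e(x_p)+p,\,D^2\varphi_\e(x_p))\in\osuperjet u(x_p)$. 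I then claim one may choose $p_n\to0$ with $\nabla\varphi_\e(x_{p_n})+p_n\neq0$: otherwise $\nabla\varphi_\e(x_p)=-p$ for all small $p$, so, $A+\e I$ being invertible, $p\mapsto x_p$ is a diffeomorphism onto a neighbourhood $U\ni x$; a second-order Taylor expansion of $\varphi_\e$ at $x_p$ combined with $\psi_p\ge u$ on $B_r(x)$ gives $u(y)\le u(x_p)+\tfrac K2|y-x_p|^2$ for \emph{every} $y\in B_r(x)$ and every base point $x_p\in U$, with $K:=\sup_{B_r(x)}\|D^2\varphi_\e\|$, and chaining this bound along a segment cut into $m$ pieces (then $m\to\infty$) forces $u$ to be constant on $U$, contradicting our assumption. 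Choosing such $p_n$ and setting $(\xi_n,A_n):=(\nabla\varphi_\e(x_{p_n})+p_n,\,D^2\varphi_\e(x_{p_n}))\in\osuperjet u(x_{p_n})$ with $\xi_n\neq0$ (so $u$ is not locally constant near $x_{p_n}$), Definition~\ref{d:BD} gives $F(\xi_n,A_n)\le g(u(x_{p_n}))$; since $\xi_n\to0$, $A_n\to A+\e I$, $x_{p_n}\to x$ and $u(x_{p_n})\to u(x)$, the definition of $F_*$ yields $F_*(0,A+\e I)\le g(u(x))$, and letting $\e\to0$ with $F_*$ lower semicontinuous gives $F_*(0,A)\le g(u(x))$, as wanted.

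The main obstacle is precisely the claim just used, namely that the linearly perturbed test functions $\psi_p$ can be taken with non-vanishing gradient along some sequence $p_n\to0$; everything else is routine manipulation of semicontinuous envelopes. The delicate part of that claim is to extract, in the contrary case, a quadratic upper bound for $u$ with a \emph{uniform} constant and valid on the whole ball $B_r(x)$ (rather than only near each base point $x_p$), which is what lets the chaining argument close with no compactness or Baire-category input.
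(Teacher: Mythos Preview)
Your argument is correct and follows the same overall strategy as the paper: both proofs isolate the only nontrivial case (a zero-gradient jet at a point where $u$ is not locally constant), perturb the test function, show that some perturbed touching point has a nonvanishing gradient, apply the Birindelli--Demengel inequality there, and pass to the limit. The two differences are cosmetic but worth recording. First, the paper perturbs by \emph{translating} the test function, $\varphi_y(z):=\varphi(y+z)$, whereas you perturb by a linear \emph{tilt} $\varphi_\e+\langle p,\cdot-x\rangle$; both devices serve the same purpose of moving the touching point while keeping the second-order data close. Second, for the key claim that ``no nonzero-gradient perturbation exists $\Rightarrow$ $u$ is locally constant'', the paper arranges $D^2\varphi(x)$ to be invertible and then defers to \cite{DFQ10,AtRu18}, while you supply a self-contained argument: the invertibility of $A+\e I$ makes $p\mapsto x_p$ a local diffeomorphism, the uniform quadratic bound $u(y)\le u(x_p)+\tfrac{K}{2}|y-x_p|^2$ holds on the \emph{whole} ball, and chaining along a finely subdivided segment forces $u$ to be constant. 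This is exactly the content hidden behind the paper's citations, so your version is simply more explicit on this point.
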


\begin{proof}
Let us show the equivalence for super-solutions, the case of sub-solutions being analogous. 
Let $u$ be a super-solution according to Definition \ref{d:CIL}
and let $x\in\Omega$. 
To show that $u$ is a super-solution according to Definition \ref{d:BD}, we have just to consider  the case when $u$ is equal to a constant $c$ on a ball $B _r (x)$, and show that $0 \geq g (c)$.  
Let us fix an arbitrary point $y\in B_r (x)$, and let us consider the test function $\varphi (z) =c - |z-y| ^ q$, with $q >2$. We have that $\varphi$ touches $u$ from below at $y$, with   $\nabla \varphi (y) = 0$ and $D ^ 2 \varphi (y) = 0$.
Therefore, by assumption
\[
F^* (\nabla \varphi(y), D ^ 2 \varphi(y)) \geq 
g(\varphi(y))
\]
or equivalently, in view of \eqref{f:pos-env},
\[
0 = F^* (0, 0) \geq 
g(c)
\,.
\] 

Conversely, let $u$ be a super-solution 
according to Definition \ref{d:BD} and let $x\in\Omega$. 
To  show  that
$u$ is a super-solution according to Definition \ref{d:CIL},
we have to consider just the situation when $\varphi$ touches $u$ from below at $x$ with $\nabla \varphi (x) = 0$. 
We distinguish two cases.  First case:
$u$ is equal to a  constant $c$ on an open ball  $B_r(x) \subset \Omega$. Then it holds  $0 \geq g (c)$  
 (because $u$ is assumed to be a super-solution according to Definition \ref{d:BD}), and
 $D ^ 2 \varphi (x) \leq 0$ (because $\varphi$ is touching from below the locally constant function $u$).
Observe that, if $X\leq 0$, by the degenerate ellipticity assumption (H2)'
we have that $F(\xi, X+Y) \geq F(\xi, Y)$ for every $\xi\neq 0$ and
$Y\in S^n$, so that, from \eqref{f:pos-env},
\[
F^*(0, X) \geq 0,
\qquad \forall X\leq 0,
\]
hence we conclude that
\[
F ^* (0, D ^  2\varphi (x)) \geq 0 \geq 
g(c)\,.
\]
Second case: $u$ is not equal to a  constant on any open ball  
$B_r(x) \subset \Omega$.
Given $y \in B _\rho (0)$, with $\rho >0$ small enough, we consider the function
\[
\varphi _y ( z) = \varphi ( y + z) \qquad \forall z \in B _ r (x)\,.
\]
Since it is not restrictive to assume that $x$ is a strict minimum point
of $u-\varphi$ in $\overline{B}_r(x)$, for $|y|$ small enough
we have that 
$\varphi _y$ touches $u$ from below at some point $x_y \in B _r (x)$.
We claim that, with no loss of generality,  we may assume that  there exists a sequence $y _ k \to 0$ such that $\nabla \varphi _{y _k} ( x _{y _k}) \neq 0 $ for every $k$.
If this is the case, by testing the equation at $x_{y _k}$, we obtain
\[
F (\nabla \varphi _{y _k} ( x _{y _k}),  D ^ 2\varphi _{y _k} ( x _{y _k}))   \geq  
g(\varphi _{y _k} ( x _{y _k}  )) \,,
\]
which by passing to the limsup as $k \to + \infty$ yields
\[
F ^* (0, D ^  2\varphi (x))  \geq 
g(\varphi(x))\,.
\]
Finally, it remains to prove the claim.  
By making $r$ smaller if necessary, we can assume that $x$ is the unique critical point of $\varphi$ in $B _ r (x)$. 
(This is immediate if $D ^ 2 \varphi (x)$ is invertible, and such condition can always be assumed up to replacing $\varphi(z)$ by $\varphi _\varepsilon (z ) = \varphi (z) - \frac{\varepsilon}{2} (z-x) ^ t M (z-x)$, being $M$ a positive definite matrix in $S^n$ such that $D ^ 2 \varphi (x) - \varepsilon M$ is invertible for all $\varepsilon >0$.)
Then, arguing by contradiction, and exploiting the fact that $x$ is the unique critical point of $\varphi$ in $B _ r (x)$, one can show that, if the sequence $ y _k$ would  not exist, $u$ should be constant around $x$
 (see
 \cite[Lemma 2.1]{DFQ10} or \cite[Proposition 2.4]{AtRu18} for more details).
\end{proof}

\bigskip
We remark that assumption \eqref{f:pos-env} is
fulfilled by every operator $F$ satisfying the homogeneity condition (H1)
with $\alpha > -1$. 
Hence, in view of Lemma \ref{l:equivalence}, in the remaining of the paper we write the words  sub- and super-solutions
referring indistinctly to Definition~\ref{d:CIL} or~\ref{d:BD}.

\smallskip
The following maximum principle will be used as a  keystone in our proof of Theorem \ref{t:BM}:

\begin{theorem}\cite[Thm.~3.3]{BiDe2006}\label{t:birdem}
Let $\Omega\subset \R ^n$ be an open bounded set
and let $F$ satisfy assumptions (H1)-(H2)'.
Let $\tau < \eigen (\Omega)$, 
and let $u$ be a viscosity sub-solution to
\[
F (\nabla u, D ^ 2 u ) = \tau |u| ^ \alpha u
\qquad  
\text{ in } \Omega,
\]
satisfying $u\leq 0$ on $\partial\Omega$.
Then $u \leq 0$ in $\Omega$.
\end{theorem}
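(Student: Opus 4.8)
To prove the theorem, the plan is to pit the strict inequality $\tau<\eigen(\Omega)$ against a positive supersolution furnished by the definition of $\eigen(\Omega)$ itself, via a sliding argument. First I would observe that, since $\tau<\eigen(\Omega)$, by definition of $\eigen(\Omega)$ as a supremum there exist $\mu\in(\tau,\eigen(\Omega)]$ and a function $v>0$ in $\Omega$ which is a viscosity supersolution of $F(\nabla v,D^2v)=\mu|v|^\alpha v$. By the homogeneity (H1) one has $F(\gamma\xi,\gamma X)=\gamma^{\alpha+1}F(\xi,X)$ for every $\gamma>0$, so each dilate $\gamma v$ is again a positive viscosity supersolution of the \emph{same} equation $F(\nabla w,D^2 w)=\mu|w|^\alpha w$; note that $u$ is not rescaled.

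Next, arguing by contradiction, I would assume $\sup_\Omega u>0$. Since $u$ is upper semicontinuous up to $\partial\Omega$ with $u\le0$ there, each superlevel set $\{u\ge\delta\}$ ($\delta>0$) is a compact subset of $\Omega$, and $\limsup u\le0$ as one approaches $\partial\Omega$; using also $v>0$ in $\Omega$, I would set
\[
\gamma^*:=\inf\{\gamma>0 \,:\, u\le\gamma v \ \text{ in }\ \Omega\},
\]
and check that $\gamma^*\in(0,+\infty)$, that $u\le\gamma^* v$ in $\Omega$, and that $u-\gamma^* v$ attains its maximum, equal to $0$, at some \emph{interior} point $x_0\in\Omega$, where $p:=u(x_0)=\gamma^* v(x_0)>0$.

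Finally I would compare, at $x_0$, the subsolution $u$ of $F(\nabla\cdot,D^2\cdot)=\tau|\cdot|^\alpha\cdot$ with the supersolution $\gamma^* v$ of $F(\nabla\cdot,D^2\cdot)=\mu|\cdot|^\alpha\cdot$. If $\gamma^* v$ were of class $C^2$ this would be immediate: $\gamma^* v$ touches $u$ from above at $x_0$, hence $F(\nabla(\gamma^* v)(x_0),D^2(\gamma^* v)(x_0))\le\tau p^{\alpha+1}$, while the supersolution inequality read pointwise gives $F(\nabla(\gamma^* v)(x_0),D^2(\gamma^* v)(x_0))\ge\mu p^{\alpha+1}$; since $p>0$, this forces $\tau\ge\mu$, a contradiction. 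In general $\gamma^* v$ is only a (lower semicontinuous) viscosity supersolution, so I would run the standard doubling of variables: maximize $u(x)-\gamma^* v(y)-\frac{1}{2\e}|x-y|^2$, apply the theorem on sums (see \cite{CIL}) to produce $x_\e,y_\e\to x_0$, a common gradient $p_\e$ and matrices $X_\e\le Y_\e$ with $(p_\e,X_\e)\in\overline{J}^{2,+}u(x_\e)$ and $(p_\e,Y_\e)\in\overline{J}^{2,-}(\gamma^* v)(y_\e)$; then test the two viscosity inequalities, use the degenerate ellipticity (H2)' to get $F(p_\e,X_\e)\ge F(p_\e,Y_\e)$, and let $\e\to0$ --- passing to the semicontinuous envelopes $F^*,F_*$ if the limiting gradient vanishes, exactly as in the proof of Lemma~\ref{l:equivalence}. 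This yields $\tau p^{\alpha+1}\ge\mu p^{\alpha+1}$ with $p>0$, i.e. $\tau\ge\mu$: contradiction. Hence $u\le0$ in $\Omega$.

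The hard part is the middle step: guaranteeing that the rescaling parameter $\gamma^*$ is finite and that the first contact between $u$ and $\gamma^* v$ occurs \emph{inside} $\Omega$, rather than escaping to $\partial\Omega$ along a sequence where $v\to0$. For domains that are smooth or convex this is routine, but for an arbitrary bounded open set it is genuinely delicate, and is handled by arranging the positive supersolution to be bounded away from $0$ near $\partial\Omega$ (for instance by taking a supersolution on a slightly enlarged domain, exploiting the monotonicity of $\eigen$ under set inclusion together with a stability/barrier estimate), equivalently by an upper barrier for $u$ at $\partial\Omega$. The doubling-of-variables step is standard but unavoidable here, since the supersolution at our disposal is not a classical one.
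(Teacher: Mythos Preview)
The paper does not supply a proof of this statement: Theorem~\ref{t:birdem} is quoted verbatim from \cite[Thm.~3.3]{BiDe2006} and used as a black box in Section~\ref{ss:BM}. So there is no ``paper's own proof'' to compare against.

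That said, your sketch is exactly the Berestycki--Nirenberg--Varadhan style argument that Birindelli--Demengel themselves implement: pick $\mu\in(\tau,\eigen(\Omega))$ and a positive supersolution $v$ at level $\mu$, slide $\gamma v$ down until first contact with $u$, and derive $\tau\ge\mu$ at the contact point via the theorem on sums. You have also put your finger on the genuine difficulty, namely ruling out that the infimum $\gamma^*$ is either $+\infty$ or is attained only along a sequence escaping to $\partial\Omega$ where $v$ may tend to~$0$. Your two suggested remedies (take $v$ on a slightly larger domain using monotonicity of $\eigen$; or build an upper barrier for $u$ at $\partial\Omega$) are the right ideas in spirit, but as written they are not self-contained: enlarging the domain requires a stability statement for $\eigen$ that is itself nontrivial, and barriers at $\partial\Omega$ typically need some boundary regularity, whereas the theorem is stated for an arbitrary bounded open set. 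In \cite{BiDe2006} this step is handled by a combination of the strict maximum principle (to get $u<\gamma^* v$ strictly on compacta once equality fails at an interior point) together with a Hopf-type boundary estimate for the supersolution; the details are not short, which is presumably why the present paper simply cites the result rather than reproving it.

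In summary: your overall strategy is correct and matches the original source, but the ``hard middle step'' that you flag is precisely the substance of the proof, and your paragraph on it is an outline rather than an argument.
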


The idea to prove Theorem \ref{t:BM} is to construct a subsolution which,
if the inequality \eqref{f:BMo} would be false, would violate the maximum principle above.
To that aim we drive our attention to the operation of infimal convolution.

\subsection{Infimal convolutions}

For a fixed $k\in\N$, set
\begin{gather*}
\Lambda^+_k :=
\Big\{
\blambda = (t_0, \ldots, t_k) :\
t_i\in (0,1]\,, \
\sum_{i=0}^{k}t_i = 1
\Big\}\,,
\\ 
\mathcal O _k:=
\Big \{ (\Omega _0, \ldots, \Omega _k ) \ :\ \Omega _i \subset \R ^n  \text{ open bounded set } \Big \}\,.
\end{gather*}
Given $(\Omega _0, \dots, \Omega _k) \in \mathcal O _k$ and
$\blambda\in\Lambda^+_k$, we consider the convex Minkowski combination
\[
\Omega_{\blambda} := t_0\, \Omega_0
+\cdots + t_k\, \Omega_k
 = \Big \{ \sum _{i=0} ^k t _i x _i \ :\ x_i \in \Omega _i \Big \} \,.
\]
Notice that $\Omega_{\blambda}$ is an open set: namely,  
if $x = \sum _{i=0} ^k t _i x _i$, for any $j \in \{0, \dots, k\}$ it holds 
\begin{equation}\label{f:inclusion}
 B_\delta(x_j) \subset\Omega_j   \ \Longrightarrow \
B_{t_j\, \delta}(x) \subset\Omega_{\blambda} \,.
\end{equation}
 Let $v _i : \Omega _i \to \R$, $i=0,\ldots,k$, be given functions. We can think $v_i$ as defined on $\R ^n$, by extending them to $+ \infty$ outside $\Omega _i$.

 We call {\it weighted infimal convolution 
of the functions $v_0, \ldots, v_k$}
(with weight $\blambda$)
the function defined on $\R ^n$ by
\[
\infconv{v}(x) :=
\inf\left\{
\sum_{i=0}^k t_i\, v_i(x_i):\
x_0,\ldots,x_k\in\R^n,\
x = \sum_{i=0}^k t_i\, x_i
\right\}\,,
\quad x\in\R^n\,.
\]

Clearly,  the weighted infimal convolution $\infconv{v}$ has finiteness domain 
$${\rm Dom} (\infconv{v}) = \Omega _{\blambda} \,.$$

We say that the infimal convolution $\infconv{v}$ is {\it exact} at a point $x \in \Omega _{\blambda}$, if the above infimum is attained.

 \smallskip
 The next result is inspired from \cite[Propositions~1 and 4]{ALL}. Given a family of continuous functions bounded from below,  it provides a key information on the subjets of their weighted infimal convolution, provided the latter is exact.
\begin{proposition}
\label{p:ALL0}
Let $(\Omega _0, \dots, \Omega _k) \in \mathcal O _k$ and
$\blambda = (t_0, \ldots,t_k)\in \Lambda^+ _k$.
Let $v _i : \Omega_i \to \R$ be continuous functions bounded from below, and
assume that $\infconv{v}$ is exact at
$x\in\Omega_{\blambda}$, with
\begin{equation}\label{f:att}
\infconv{v} (x) = \sum_{i=0}^k t_i\, v_i(x_i),
\quad
x = \sum_{i=0}^k t_i\, x_i,
\
x_i\in\Omega_i\ \forall i = 0, \ldots, k\,.
\end{equation}
Then, for a given pair $(\xi, A) \in J^{2-} \infconv{v} (x)$,
and for every $\varepsilon > 0$,
there exist
$A_0, \ldots, A_k \in S^n$ such that
$(\xi, A_i) \in \overline{J}^{2-} v_i (x_i)$,
$i=0,\ldots, k$, and
\begin{equation}\label{f:matineq0}
A - \varepsilon A^2 \leq
\sum_{i=0}^k t_i A_i\,.
\end{equation}
If, in addition, $A\geq 0$, and $\varepsilon$ is small enough,
then $A_i\geq 0$ for every $i=0,\ldots,k$ and
\begin{equation}\label{f:matineq}
A - \varepsilon A^2 \leq
\left(
t_0\, A_0^{-1} + \ldots +
t_k\, A_k^{-1}
\right)^{-1}\,.
\end{equation}
\end{proposition}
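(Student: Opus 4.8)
The plan is to reduce to the origin, produce a $C^2$ function touching $\infconv{v}$ from below at the relevant point, and then apply the Crandall--Ishii theorem on sums to the resulting constrained minimum. By a translation I may assume $x=0$ and $x_i=0$ for every $i$ (replacing $v_i$ by $v_i(\cdot+x_i)$, which replaces $\infconv{v}$ by $\infconv{v}(\cdot+x)$), and by subtracting constants that $v_i(0)=0$, so that \eqref{f:att} reads $\infconv{v}(0)=\sum_i t_iv_i(0)=0$. Since $(\xi,A)\in J^{2-}\infconv{v}(0)$, by the standard description of subjets there is a function $\phi\in C^2$ with $\phi(0)=0$, $\nabla\phi(0)=\xi$, $D^2\phi(0)=A$, such that $\infconv{v}-\phi$ has a local minimum, equal to $0$, at the origin. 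Writing $Th:=\sum_{i=0}^k t_ih_i$ for $h=(h_0,\ldots,h_k)\in(\R^n)^{k+1}$, so that $T^*\eta=(t_0\eta,\ldots,t_k\eta)$ and $TT^*=s\,I$ with $s:=\sum_i t_i^2\in(0,1]$, I set
\[
\Phi(x_0,\ldots,x_k):=\sum_{i=0}^k t_iv_i(x_i)-\phi(Tx),\qquad (x_0,\ldots,x_k)\in\Omega_0\times\cdots\times\Omega_k.
\]
For $(x_0,\ldots,x_k)$ near the origin one has $Tx\in\Omega_{\blambda}$ (by \eqref{f:inclusion}) and $\sum_i t_iv_i(x_i)\ge\infconv{v}(Tx)\ge\phi(Tx)$, hence $\Phi\ge 0=\Phi(0,\ldots,0)$: the origin is a local minimum of $\Phi$, equivalently a local maximum of $\sum_i(-t_iv_i)(x_i)-\big(-\phi(Tx)\big)$. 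It is exactly the exactness hypothesis \eqref{f:att} that puts this minimum at $(0,\ldots,0)$.

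Now I would apply the Crandall--Ishii lemma \cite{CIL} to this local maximum, with parameter $\varepsilon$ itself: the functions $-t_iv_i$ are continuous on $\Omega_i$ and $x\mapsto-\phi(Tx)$ is $C^2$ with Hessian $-T^*AT$ at the origin, so the lemma produces matrices $X_0,\ldots,X_k\in S^n$ with $(-t_i\xi,X_i)\in\overline{J}^{2+}(-t_iv_i)(0)$ — equivalently $(\xi,A_i)\in\overline{J}^{2-}v_i(0)$ for $A_i:=-X_i/t_i$ — and
\[
\mathrm{diag}(X_0,\ldots,X_k)\ \le\ -T^*AT+\varepsilon\,(T^*AT)^2.
\]
Since $(T^*AT)^2=s\,T^*A^2T$, substituting $X_i=-t_iA_i$ and changing sign turns this into $\mathrm{diag}(t_0A_0,\ldots,t_kA_k)\ge T^*(A-\varepsilon sA^2)T$; testing against $h=(h_0,\ldots,h_k)$ gives
\begin{equation*}
\sum_{i=0}^k t_i\,\langle A_ih_i,h_i\rangle\ \ge\ \langle (A-\varepsilon sA^2)\,Th,\,Th\rangle\qquad\text{for all }h_0,\ldots,h_k\in\R^n. \tag{$\star$}
\end{equation*}

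Both conclusions now follow from $(\star)$ with this single choice of $A_0,\ldots,A_k$. Taking $h_0=\cdots=h_k=h$ gives $Th=h$, so $\sum_i t_iA_i\ge A-\varepsilon sA^2\ge A-\varepsilon A^2$ (as $0<s\le1$ and $A^2\ge0$), which is \eqref{f:matineq0}. Assume moreover $A\ge0$ and $\varepsilon$ small enough that $A-\varepsilon sA^2\ge0$ (possible since $A$ is a fixed matrix with nonnegative eigenvalues). Taking $h_j=0$ for $j\ne i$ in $(\star)$ gives $\langle A_ih_i,h_i\rangle\ge t_i\langle(A-\varepsilon sA^2)h_i,h_i\rangle\ge0$, so $A_i\ge0$; in fact $A_i\ge t_i(A-\varepsilon sA^2)$, whence each $A_i$ is positive definite as soon as $A$ is. Finally, for fixed $v\in\R^n$, minimizing the left-hand side of $(\star)$ over $\{(h_0,\ldots,h_k):Th=v\}$ — a strictly convex quadratic problem, solved via Lagrange multipliers with minimal value $\big\langle(\sum_i t_iA_i^{-1})^{-1}v,\,v\big\rangle$ — yields $\big\langle(\sum_i t_iA_i^{-1})^{-1}v,\,v\big\rangle\ge\langle(A-\varepsilon sA^2)v,v\rangle\ge\langle(A-\varepsilon A^2)v,v\rangle$ for every $v$, which is \eqref{f:matineq}. (If $A$ is only positive semidefinite, the same reading works with $A_i^{-1}$ interpreted as $\lim_{\rho\downarrow0}(A_i+\rho I)^{-1}$, consistently since $A_i\ge t_i(A-\varepsilon sA^2)$ forces $\ker A_i\subseteq\ker A=\ker(A-\varepsilon A^2)$.)

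I do not expect a deep obstacle: the result is a careful application of the theorem on sums followed by linear algebra. The points that need attention are the choice of the composite test function $-\phi(Tx)$ together with the bookkeeping relating super-jets of $-t_iv_i$ to sub-jets of $v_i$ — so that the lemma delivers exactly the objects in the statement, the appearance of the closures $\overline{J}^{2-}$ being unavoidable — and the algebraic passage from the block matrix bound to the harmonic-mean inequality \eqref{f:matineq}, plus the minor care required for the $A_i$ when $A$ is degenerate.
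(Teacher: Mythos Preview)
Your proof is correct and follows essentially the same route as the paper's: both recognize that exactness at $x$ makes $(x_0,\ldots,x_k)$ a minimizer of $\sum_i t_iv_i(y_i)-\varphi(Ty)$, apply the Crandall--Ishii theorem on sums to this minimum, and then test the resulting block matrix inequality with diagonal vectors (for \eqref{f:matineq0}), with single-slot vectors (for $A_i\ge0$), and finally minimize over the constraint $Th=v$ (for \eqref{f:matineq}). Your explicit computation $(T^*AT)^2=s\,T^*A^2T$ with $s=\sum_i t_i^2\le1$ actually yields the slightly sharper bound $A-\varepsilon sA^2$ before you relax it to $A-\varepsilon A^2$; the paper writes $B=A-\varepsilon A^2$ directly, which is harmless since $\varepsilon$ is arbitrary.
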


\begin{remark}
The above result (and its proof) is quite similar to Proposition~1 in  \cite{ALL}. 
For completeness, we give the proof in some detail,
since we are going to exploit inequality~\eqref{f:matineq0}, which is 
not explicitly given in \cite{ALL}.
 \end{remark}

\begin{proof}
To simplify the notation, let us denote $w := \infconv{v}$.
Let $\varphi\in C^2(\Omega_{\blambda})$ be a test function
such that $\varphi \prec_x w$.
Let $(y_0, \ldots, y_k) \in \Omega_0 \times \cdots \times \Omega_k$.
By the definition of $w$, the fact that
$(w-\varphi)(y) \geq (w-\varphi)(x)$ for every
$y\in\Omega_{\blambda}$, and
since $\infconv{v}$ is exact at $x$,
we have that
\[
\begin{split}
\sum_{i=0}^k t_i v_i(y_i) - \varphi\left(\sum_{i=0}^k t_i \, y_i\right)
& \geq
w\left(\sum_{i=0}^k t_i \, y_i\right) -
\varphi\left(\sum_{i=0}^k t_i \, y_i\right)
\\ & \geq
w\left(\sum_{i=0}^k t_i \, x_i\right) -
\varphi\left(\sum_{i=0}^k t_i \, x_i\right)
\\ & =
 \sum_{i=0}^k t_i\, v_i(x_i)
-\varphi\left(\sum_{i=0}^k t_i \, x_i\right)
\,.
\end{split}
\]
In other words, the point $(x_0, \ldots, x_k)$ where the infimum in \eqref{f:att} is attained turns out to be a minimum point
for the function
\[
\Omega_0\times\cdots\times\Omega_k \ni (y_0, \ldots, y_k) \mapsto
\sum_{i=0}^k t_i v_i(y_i) - \varphi\left(\sum_{i=0}^k t_i \, y_i\right)\,.
\]
Then, by \cite[Theorem~3.2]{CIL},
for every $\varepsilon>0$ there exist
$A_0, \ldots, A_k \in S^n$ such that
$(\xi, A_i) \in \overline{J}^{2-} v_i (x_i)$,
$i=0,\ldots, k$, and
\begin{equation}\label{f:matrix0}
\begin{pmatrix}
t_0\, A_0 & \cdots & 0 \\
\vdots & \ddots & \vdots \\
0 & \cdots & t_k\, A_k
\end{pmatrix}
\geq
\begin{pmatrix}
t_0^2\, B & \cdots & t_0t_k\, B \\
\vdots & \ddots & \vdots \\
t_0t_k\, B & \cdots & t_k^2\, B^2
\end{pmatrix}
\end{equation}
with $B := A - \varepsilon\, A^2$.

The inequality in \eqref{f:matineq0} follows by
testing \eqref{f:matrix0} with a vector of the form
$(h, \dots, h ) \in (\R ^n ) ^k$.

Moreover, by testing \eqref{f:matrix0} with vectors of the form $(0, \ldots, h_i, \ldots, 0)$,
we get  the inequalities
\begin{equation}\label{f:inoltre}
t_i ( A - \varepsilon A^2) \leq
 A_i,
\qquad \forall i=0,\ldots,k\,,
\end{equation}
 whereas, testing
\eqref{f:matrix0} with an arbitrary vector
$(h_0, \ldots, h_k)$, we see that
\begin{equation}\label{f:tomin}
\pscal{B\,h}{h}
\leq
\sum_{i=0}^k t_i\, \pscal{A_i\, h_i}{h_i}\,,
\qquad
\text{with}\
h := \sum_{i=0}^k t_i\, h_i\,.
\end{equation}

Assume now that $A\geq 0$,
and choose  $\varepsilon>0$ so that $I> \varepsilon A $,
and hence $B\geq 0$.
From \eqref{f:inoltre}, we see that $A_i\geq 0$ for every $i$.
In fact, it is not restrictive to assume that
$A_i$ are positive definite,
since the case of degenerate matrices can be handled
as in \cite{ALL}, p.~273.

Finally, minimizing the right-hand side of \eqref{f:tomin}
under the constraint $\sum_{i=0}^k t_i\, h_i = h$
leads to \eqref{f:matineq}.
\end{proof}

 \bigskip
 In order to be able to apply Proposition \ref{p:ALL0}, we complement it with the following statement, which provides sufficient conditions for the weighted convolution to be exact.

 \begin{proposition}\label{p:bordo}
 Let $(\Omega _0, \dots, \Omega _k) \in \mathcal O _k$ and
$\blambda = (t_0, \ldots,t_k)\in \Lambda^+ _k$.
Let $v _i : \Omega_i \to \R$ be continuous functions bounded from below, with
\begin{equation}\label{f:divi} v _i \to + \infty \quad \text{ as } x \to \partial \Omega _i \, , \quad \forall i = 1\dots, k\,.
\end{equation}
Then the weighted infimal convolution $\infconv{v}$ is continuous and exact at every point $x \in \Omega _{\blambda}$.
Moreover, it holds
\begin{equation}\label{f:div} \infconv{v} \to + \infty \quad \text{ as } x \to \partial \Omega _{\blambda}\,.
\end{equation}
\end{proposition}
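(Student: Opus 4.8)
### Proof Proposal

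\textbf{Setup and strategy.} The plan is to prove the three assertions of Proposition~\ref{p:bordo} — exactness, continuity, and the boundary blow-up \eqref{f:div} — essentially in that order, since the later ones lean on the earlier. Throughout, write $w := \infconv{v}$ and recall that $\Dom(w) = \Omega_{\blambda}$, which is open by \eqref{f:inclusion}. The only function without a blow-up hypothesis is $v_0$, so the key structural fact is that, when we try to minimize $\sum_i t_i v_i(x_i)$ over decompositions $x = \sum_i t_i x_i$, the indices $i = 1, \ldots, k$ supply coercivity while $i = 0$ is controlled by simply being bounded below. The first move is therefore: fix $x \in \Omega_{\blambda}$ and take a minimizing sequence $(x_0^{(m)}, \ldots, x_k^{(m)})$ of decompositions of $x$; I want to show it stays in a compact subset of $\Omega_0 \times \cdots \times \Omega_k$ and extract a convergent subsequence whose limit realizes the infimum.

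\textbf{Exactness.} Fix $x \in \Omega_{\blambda}$ and pick one admissible decomposition $x = \sum_i t_i \bar x_i$ with $\bar x_i \in \Omega_i$, which exists since $x \in \Omega_{\blambda}$; this gives a finite upper bound $w(x) \le \sum_i t_i v_i(\bar x_i) =: L$. Along a minimizing sequence, for each $i \ge 1$ we have $t_i v_i(x_i^{(m)}) = \big(\sum_j t_j v_j(x_j^{(m)})\big) - \sum_{j \neq i} t_j v_j(x_j^{(m)}) \le (L+1) - \sum_{j \neq i} t_j (\inf_{\Omega_j} v_j)$, so $v_i(x_i^{(m)})$ is bounded above for every $i \ge 1$, using that each $v_j$ is bounded below. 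By the divergence hypothesis \eqref{f:divi}, the sequences $(x_i^{(m)})_m$ for $i \ge 1$ stay in a compact subset of $\Omega_i$ away from $\partial\Omega_i$; hence, up to a subsequence, $x_i^{(m)} \to x_i \in \Omega_i$ for each $i \ge 1$. The constraint then forces $t_0 x_0^{(m)} = x - \sum_{i \ge 1} t_i x_i^{(m)} \to x - \sum_{i\ge 1} t_i x_i =: t_0 x_0$; the candidate point is $x_0 = \lim x_0^{(m)}$, and I need $x_0 \in \Omega_0$. This is exactly \eqref{f:inclusion} read in reverse: for large $m$, $x_i^{(m)}$ is at distance $\ge \delta$ from $\partial\Omega_i$ for every $i \ge 1$, so the ball $B_{t_0 \delta'}(x)$ decomposes with margin in the $i\ge 1$ slots, which pins $x_0^{(m)}$ into a ball inside $\Omega_0$ — more carefully, since $x \in \Omega_{\blambda}$ is fixed and interior, a fixed ball $B_\rho(x) \subset \Omega_{\blambda}$ together with the interior margins of the $x_i^{(m)}$ keeps $x_0^{(m)}$ in a compact subset of $\Omega_0$. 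Then lower semicontinuity (in fact continuity) of each $v_i$ gives $\sum_i t_i v_i(x_i) \le \liminf_m \sum_i t_i v_i(x_i^{(m)}) = w(x)$, so the infimum is attained.

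\textbf{Continuity and boundary blow-up.} Upper semicontinuity of $w$ on $\Omega_{\blambda}$ is immediate: given $x$ and its optimal decomposition $x = \sum_i t_i x_i$, translate to get $x + h = \sum_i t_i(x_i + h_i)$ with $\sum_i t_i h_i = h$ and $|h_i| \le |h|$, and use continuity of the $v_i$. For lower semicontinuity at $x$, take $x^{(m)} \to x$ with optimal decompositions $x^{(m)} = \sum_i t_i x_i^{(m)}$; repeat the compactness argument above (now $L$ can be taken as, say, $w(x)+1$, valid for large $m$ by the already-established upper semicontinuity) to extract $x_i^{(m)} \to x_i$ with $\sum_i t_i x_i = x$, and then $\liminf_m w(x^{(m)}) = \liminf_m \sum_i t_i v_i(x_i^{(m)}) \ge \sum_i t_i v_i(x_i) \ge w(x)$. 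Finally, for \eqref{f:div}: suppose $x^{(m)} \to x_\infty \in \partial\Omega_{\blambda}$ but $w(x^{(m)}) \le L$ along a subsequence. Running the same compactness extraction, the $i \ge 1$ components converge to points $x_i \in \Omega_i$ and $t_0 x_0^{(m)} \to x_\infty - \sum_{i \ge 1} t_i x_i$; if this limit point $x_0$ lay in $\Omega_0$ we would get $x_\infty = \sum_i t_i x_i \in \Omega_{\blambda}$ by \eqref{f:inclusion}, contradicting $x_\infty \in \partial\Omega_{\blambda}$. So $x_0 \in \partial\Omega_0$ — but this alone is not yet a contradiction since $v_0$ need not blow up; here I exploit that the $i \ge 1$ components are pinned strictly inside their domains, so $x_\infty = t_0 x_0 + \sum_{i\ge 1} t_i x_i$ with $x_i$ interior forces, via \eqref{f:inclusion} applied to the interior points, that a whole neighborhood of $x_\infty$ relative to the ``$x_0$ direction'' lies in $\Omega_{\blambda}$ unless $x_0 \in \partial\Omega_0$ is genuinely constrained; the cleanest route is to note $x_\infty \in t_0 \overline{\Omega_0} + \sum_{i \ge 1} t_i \Omega_i \subseteq \overline{\Omega_{\blambda}}$, and $x_\infty \in \partial\Omega_{\blambda}$ then forces $x_0 \in \partial\Omega_0$, which combined with the interior margins of the $x_i$, $i\ge 1$, is actually impossible because it would exhibit $x_\infty$ as an interior point of $\Omega_{\blambda}$ (every nearby point decomposes by perturbing only the interior slots). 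I expect \textbf{this last step — ruling out $x_0 \in \partial\Omega_0$ using only the strict interiority of the other components and the Minkowski-sum openness \eqref{f:inclusion}} — to be the main technical obstacle, and the argument must be written with care about which ball radii are fixed and which shrink.
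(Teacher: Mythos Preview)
The paper's proof is far more economical than yours: continuity and exactness are dispatched by a citation to Str\"omberg \cite{Strom1996}, and only the boundary blow-up \eqref{f:div} is argued directly. Your hands-on route via minimizing sequences is a legitimate alternative, but two points deserve attention.

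\emph{Exactness.} The step where you assert that the interior margins of $x_i^{(m)}$, $i\ge 1$, together with $B_\rho(x)\subset\Omega_{\blambda}$, ``pin $x_0^{(m)}$ into a ball inside $\Omega_0$'' is not justified---and in fact it is false under the hypothesis as literally written (blow-up only for $i\ge 1$). A one-dimensional example: take $k=1$, $\Omega_0=(0,1)$, $\Omega_1=(0,2)$, $t_0=t_1=\tfrac12$, $v_0(s)=s$, and $v_1$ continuous on $(0,2)$, diverging at the endpoints but constant on $[0.2,1.2]$; for $x=0.6$ the admissible decompositions are $x_0\in(0,1)$, $x_1=1.2-x_0\in(0.2,1.2)$, the objective reduces to $\tfrac12(x_0+c)$, and the infimum (as $x_0\to 0^+$) is not attained. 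In the paper's application (Section~\ref{ss:BM}) every $v_i$, including $v_0$, diverges at the boundary; under that stronger assumption your compactness argument goes through symmetrically in all indices, and the difficulty you sense evaporates.

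\emph{Boundary blow-up.} What you flag as the ``main technical obstacle'' dissolves once you apply \eqref{f:inclusion} \emph{before} passing to any limit, which is precisely what the paper does. The contrapositive of \eqref{f:inclusion} reads: if $x=\sum_i t_i x_i$ with each $x_i\in\Omega_i$, then
\[
\dist(x_i,\partial\Omega_i) \;<\; \frac{1}{t_i}\,\dist(x,\partial\Omega_{\blambda})
\qquad\text{for every } i=0,\ldots,k.
\]
Thus if $x^{(m)}\to\partial\Omega_{\blambda}$ and $x^{(m)}=\sum_i t_i x_i^{(m)}$ is \emph{any} admissible decomposition (in particular the exact one), then $x_i^{(m)}\to\partial\Omega_i$ for \emph{all} $i$ simultaneously. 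For $i\ge 1$ this forces $v_i(x_i^{(m)})\to+\infty$, and since the remaining terms are bounded below, $w(x^{(m)})\to+\infty$. No subsequences, no limit points $x_\infty$, and the slot $i=0$ never needs separate handling. Equivalently, in your contradiction framework: once some $x_j^{(m)}$ with $j\ge 1$ stays at distance $\ge\delta$ from $\partial\Omega_j$, \eqref{f:inclusion} applied at stage $m$ (where all $x_i^{(m)}\in\Omega_i$) gives $B_{t_j\delta}(x^{(m)})\subset\Omega_{\blambda}$, already contradicting $x^{(m)}\to\partial\Omega_{\blambda}$.
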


\begin{proof}
 For the continuity of the weighted infimal convolution and the fact that it is exact, we refer to  \cite{Strom1996}, Theorem~2.5 and Corollary~2.1. In order to prove the last part of the statement, let us consider a sequence of points $x ^n \to \partial \Omega _{\blambda}$ as $n \to + \infty$. Since the   weighted infimal convolution is exact, there exists sequences $x_i ^n$, $i = 0, \dots, k$,  such that
$$ \infconv{v} (x^n) = \sum_{i=0}^k t_i\, v_i(x^n_i),
\quad
x^n = \sum_{i=0}^k t_i\, x^n_i,
\quad
x^n_i\in\Omega_i\ \  \forall i = 0, \ldots, k\,.
$$
 We claim that $x ^n _i \to \partial \Omega _i$ as $n \to + \infty$, $\forall i = 1, \dots, k$.
 Once proved the claim, the required property \eqref{f:div} follows at once from \eqref{f:divi}
 and the assumptions that the functions $v_i$'s are bounded from below.

To show the claim  it is enough to observe that, if $\dist(x, \partial\Omega_{\blambda}) < \delta$,
then $\dist(x_i, \partial\Omega_i) < \delta / t_i$
for every $i=0,\ldots,k$.
Indeed, if we assume by contradiction that there exists $j$ such that
$\dist(x_j, \partial\Omega_j) \geq \delta / t_j$, then
$B_{t_j^{-1}\, \delta}(x_j) \subset \Omega_j$.
By \eqref{f:inclusion}, this implies
$B_\delta(x) \subset\Omega_{\blambda}$,
contradiction.
\end{proof}

\subsection{The modified equation}
In view of Proposition \ref{p:bordo}, it is convenient to look at the equation satisfied by minus the logarithm of viscosity eigenfunctions, so to deal with functions which diverge on the boundary. To that aim,
let us introduce the operator $G$ associated  with $F$ by
\begin{equation}\label{f:H}
G(\xi, X) := -F(\xi, \xi\otimes\xi - X)\,,
\end{equation}
 and let us consider the modified equation
\begin{equation}\label{f:eqmod1}
G (\nabla v, D ^ 2 v ) = - \eigen(\Omega) \quad \text{ in } \Omega \, .
\end{equation}

 \begin{remark}\label{r:G}
If $F$ satisfies (H2) (resp.\ (H2)'), the same holds for $G$. Moreover, if $F$ satisfies (H3), namely $F$ is convex in $X$, then $G$ is concave in $X$. \end{remark}

\begin{remark}\label{r:eqmod}
Similarly as in Section \ref{sec:prel}, also viscosity sub- and super-solutions to \eqref{f:eqmod1} can be intended either \`a la Crandall-Ishii-Lions or \`a la Birindelli-Demengel, namely according to Definition \ref{d:CIL} or to Definition \ref{d:BD}. Thanks to Lemma \ref{l:equivalence}, 
the two notions are equivalent. Note in particular that, since the right--hand side of \eqref{f:eqmod1} is negative, for super-solutions the ``either'' condition in Definition \ref{d:BD}  is automatically satisfied.   
\end{remark}

\begin{lemma}\label{l:log} 
Assume that $F$ satisfies (H1)-(H2)', and let $G$ be defined by \eqref{f:H}.
Then a function $u$  is a positive   viscosity sub-solution 
to
\[
 F(\nabla  u, D ^ 2 u ) = \eigen (\Omega)  u ^ {\alpha + 1}  \quad \text{ in } \Omega
\]
if and only if the function $v = - \log u$ is a viscosity super-solution to 
\[
G (\nabla v, D ^ 2 v ) = - \eigen(\Omega) \quad \text{ in } \Omega \, .
\]
\end{lemma}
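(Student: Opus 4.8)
The plan is to prove Lemma~\ref{l:log} by a direct computation relating the second-order jets of $u$ and $v=-\log u$, together with the homogeneity condition (H1). First I would observe that since $u>0$ is continuous (being upper semicontinuous and a sub-solution, we may assume it at least locally bounded below away from $0$ on compact subsets; in fact positivity is assumed), the transformation $v=-\log u$ is a homeomorphism of $(0,+\infty)$ onto $\R$, so it maps touching test functions to touching test functions. Concretely, if $\varphi$ touches $v$ from below at $x_0$, then $\psi:=e^{-\varphi}$ touches $u$ from above at $x_0$; conversely, if $\psi$ touches $u$ from above at $x_0$ with $\psi>0$ near $x_0$, then $\varphi:=-\log\psi$ touches $v$ from below at $x_0$. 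This gives the one-to-one correspondence between elements of $\osubjet v(x_0)$ and $\osuperjet u(x_0)$ that I need, and I would phrase the whole argument in terms of jets to avoid any regularity worry, using the equivalence of Definitions~\ref{d:CIL} and~\ref{d:BD} guaranteed by Lemma~\ref{l:equivalence} (applicable since (H1) with $\alpha>-1$ gives \eqref{f:pos-env}).

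Next I would carry out the jet computation. Suppose $(\xi,A)\in\osubjet v(x_0)$; writing $u=e^{-v}$ and differentiating formally (which is legitimate at the level of jets by a second-order Taylor expansion), the corresponding element of $\osuperjet u(x_0)$ is $(\eta,B)$ with $\eta=-u(x_0)\,\xi$ and $B=u(x_0)\big(\xi\otimes\xi-A\big)$. Now apply the definition of viscosity super-solution for $v$ to the modified equation: for every $(\xi,A)\in\osubjet v(x_0)$ with $\xi\neq 0$,
\[
G^*(\xi,A)\geq -\eigen(\Omega)\,,
\quad\text{i.e.}\quad
-F_*\big(\xi,\xi\otimes\xi-A\big)\geq -\eigen(\Omega)\,,
\]
recalling the definition \eqref{f:H} of $G$ and that passing to envelopes commutes with the sign change and the affine substitution $X\mapsto \xi\otimes\xi-X$. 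Using (H1) with $t=-u(x_0)$ and $\mu=u(x_0)$ — note $|t|^\alpha\mu=u(x_0)^{\alpha+1}$ — we get
\[
F_*(\eta,B)=F_*\big(-u(x_0)\xi,\ u(x_0)(\xi\otimes\xi-A)\big)
=u(x_0)^{\alpha+1}F_*\big(\xi,\xi\otimes\xi-A\big)\leq u(x_0)^{\alpha+1}\eigen(\Omega)\,,
\]
which is precisely the sub-solution inequality $F_*(\eta,B)\leq \eigen(\Omega)\,u(x_0)^{\alpha+1}$ for $u$ at the jet $(\eta,B)$ with $\eta\neq 0$. Running the computation in reverse gives the converse implication. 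The degenerate case $\xi=0$ needs the Birindelli--Demengel formulation: for the super-solution $v$ the ``either'' branch is vacuous since the right-hand side $-\eigen(\Omega)<0$ (as noted in Remark~\ref{r:eqmod}), and correspondingly for $u$ one checks that at a point where $u$ is locally constant equal to $c>0$ one needs $0\le \eigen(\Omega)c^{\alpha+1}$, which holds; the jets with $\xi=0$ are then covered by Lemma~\ref{l:equivalence}, so it suffices to test on jets with non-vanishing gradient.

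The main obstacle, such as it is, is bookkeeping with the lower/upper semicontinuous envelopes $F_*$ and $G^*$: one must check that $G^*(\xi,A)=-F_*(\xi,\xi\otimes\xi-A)$, which follows because $X\mapsto \xi\otimes\xi-X$ is an affine homeomorphism of $S^n$ and negation swaps lsc and usc envelopes, but it should be stated carefully. A secondary point is making sure the formal chain-rule identities $\eta=-u\xi$, $B=u(\xi\otimes\xi-A)$ are valid for elements of the \emph{closed} subjets/superjets and not merely for smooth test functions; this is standard and follows from stability of jets under $C^{1,1}$ changes of variable (here the $C^\infty$ map $s\mapsto e^{-s}$), so I would either invoke this directly or verify it by an elementary expansion. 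No genuinely deep difficulty arises; the lemma is essentially an exercise in transforming the equation, and the role of (H1) is exactly to absorb the multiplicative factor $u(x_0)^{\alpha+1}$ coming from the logarithmic change of variables.
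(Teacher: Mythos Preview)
Your proposal is correct and follows essentially the same approach as the paper's proof: both rely on the bijection between test functions (equivalently, jets) of $u$ touching from above and those of $v=-\log u$ touching from below, the homogeneity (H1) to absorb the factor $u(x_0)^{\alpha+1}$, and the identity $G^*(\xi,A)=-F_*(\xi,\xi\otimes\xi-A)$. The only differences are expository---you phrase the argument via jets and treat the $\xi=0$ case more explicitly through the Birindelli--Demengel formulation, whereas the paper works directly with test functions \`a la Crandall--Ishii--Lions and leaves the degenerate case to Lemma~\ref{l:equivalence}.
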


\begin{proof}
Let us give the proof working with solutions \`a la Crandall-Ishii-Lions. 
We observe that $u \prec_x \varphi$ if and only if $\psi:= - \log \varphi \prec _x v  $, and that the inequality $F_*(\nabla \varphi, D ^ 2 \varphi)  \leq \eigen(\Omega) \varphi ^ {\alpha +1} $
can be rewritten as
$$F_* \big ( - e ^ { - \psi} \nabla \psi, e ^ { - \psi} (\nabla \psi \otimes \nabla \psi - D ^ 2 \psi)\big ) \leq
\eigen (\Omega)    e ^ { - (\alpha + 1)\psi}\,. $$
By (H1), this amounts to
$$F_* \big (   \nabla \psi,  \nabla \psi \otimes \nabla \psi - D ^ 2 \psi \big ) \leq \eigen (\Omega)    \,.  $$
The required equivalence follows by observing that 
\[
G^*(\xi, X) = [-F(\xi, \xi\otimes\xi - X)]^*
= - F_*(\xi, \xi\otimes\xi - X)\,.
\qedhere
\]
\end{proof}

\bigskip

We are finally in a position to give the main brick for the proof of Theorem \ref{t:BM}:

\begin{proposition}\label{p:concave}
Assume that $F$ satisfies 
$F_*(0,0) = 0$, (H2)' and (H3),  and let $G$ be defined by \eqref{f:H}.  
Let $(\Omega _0, \dots, \Omega _k) \in \mathcal O _k$ 
, and
$\blambda = (t_0, \ldots,t_k)\in \Lambda^+ _k$.
Let $v_0, \ldots, v_k$ be continuous functions bounded from below which are viscosity super-solutions to
\[
\begin{cases}
G(\nabla v_i, D^2 v_i) = - \eigen (\Omega _i)  & \text{in}\ \Omega_i,\\
v_i \to +\infty &\text{on}\ \partial\Omega_i\,.
\end{cases}
\]
Then $w = \infconv{v}$ is a viscosity super-solution to
\[
\begin{cases}
G(\nabla w, D^2 w) = - \sum _{i=0} ^k t_i \eigen (\Omega _i)  & \text{in}\ \Omega_{\blambda},\\
w \to +\infty &\text{on}\ \partial\Omega_{\blambda}\,.
\end{cases}
\]
\end{proposition}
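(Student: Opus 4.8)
\textbf{Proof plan for Proposition~\ref{p:concave}.}
The plan is to combine the exactness result of Proposition~\ref{p:bordo} with the sub-jet estimate of Proposition~\ref{p:ALL0}, using the concavity of $G$ in its matrix variable (Remark~\ref{r:G}) to pass the super-solution property through the infimal convolution. First I would handle the boundary behaviour: since each $v_i$ diverges on $\partial\Omega_i$ and is bounded from below, Proposition~\ref{p:bordo} applies, giving that $w = \infconv{v}$ is continuous on $\Omega_{\blambda}$, exact at every point, and satisfies $w\to+\infty$ on $\partial\Omega_{\blambda}$; this settles the boundary condition for free. Note also that $w$ is bounded from below (being the weighted average of lower bounds), so all hypotheses of Proposition~\ref{p:ALL0} are in force at any point of $\Omega_{\blambda}$.

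Next I would verify the PDE inequality in the viscosity sense. Fix $x\in\Omega_{\blambda}$ and a pair $(\xi, A)\in \osubjet w(x)$; by Lemma~\ref{l:equivalence} and Remark~\ref{r:eqmod} it suffices to treat the case $\xi\neq 0$ (the ``either'' alternative in Definition~\ref{d:BD} being automatic because the right-hand side is negative). By exactness there are $x_i\in\Omega_i$ with $x=\sum_i t_i x_i$ and $w(x)=\sum_i t_i v_i(x_i)$. Applying Proposition~\ref{p:ALL0}, for every $\varepsilon>0$ I get matrices $A_i$ with $(\xi, A_i)\in\overline{J}^{2-}v_i(x_i)$ and $A-\varepsilon A^2 \leq \sum_i t_i A_i$. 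Since each $v_i$ is a viscosity super-solution (and the closure of the sub-jet does not change the tested inequality by continuity of $G$), testing the equation for $v_i$ at $x_i$ with the \emph{same} gradient $\xi$ gives $G(\xi, A_i)\geq -\eigen(\Omega_i)$ for every $i$. Now I use degenerate ellipticity (H2)$'$ of $G$ — inherited from $F$ by Remark~\ref{r:G} — together with the matrix inequality: from $A-\varepsilon A^2\leq \sum_i t_i A_i$ and monotonicity of $G$ in the matrix slot we obtain $G(\xi, A-\varepsilon A^2)\geq G(\xi,\sum_i t_i A_i)$, and then concavity of $X\mapsto G(\xi,X)$ gives $G(\xi, \sum_i t_i A_i)\geq \sum_i t_i\, G(\xi, A_i)\geq -\sum_i t_i\,\eigen(\Omega_i)$. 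Letting $\varepsilon\to 0^+$ and using continuity of $G$ yields $G(\xi, A)\geq -\sum_i t_i\,\eigen(\Omega_i)$, which is exactly the desired super-solution inequality.

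The main obstacle I anticipate is bookkeeping rather than conceptual: one must be careful that Proposition~\ref{p:ALL0} produces the $A_i$ in the \emph{closure} $\overline{J}^{2-}v_i(x_i)$ of the sub-jet, so the test inequality for $v_i$ should be invoked in the form valid for closed sub-jets (which holds since $G$ is continuous and $v_i$ is a super-solution); and that the common gradient $\xi$ is indeed nonzero at all $x_i$ whenever $\xi\neq 0$ at $x$, which is precisely the output of Proposition~\ref{p:ALL0}. A second point to treat cleanly is the order of limits: the bound $G(\xi, A-\varepsilon A^2)\geq -\sum_i t_i\,\eigen(\Omega_i)$ is uniform in $\varepsilon$, so passing $\varepsilon\to 0$ is harmless by continuity of $G$ on $(\R^n\setminus\{0\})\times S^n$. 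Finally, the hypothesis $F_*(0,0)=0$ is only needed to invoke the equivalence Lemma~\ref{l:equivalence} and thereby reduce to testing with $\xi\neq 0$; once that reduction is made, it plays no further role.
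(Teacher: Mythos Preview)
Your proposal is correct and follows essentially the same route as the paper's proof: invoke Proposition~\ref{p:bordo} for continuity, exactness and the boundary blow-up, then at a point $x\in\Omega_{\blambda}$ with $(\xi,A)\in J^{2-}w(x)$ and $\xi\neq 0$ apply Proposition~\ref{p:ALL0} to obtain $(\xi,A_i)\in\overline{J}^{2-}v_i(x_i)$ with $A-\varepsilon A^2\leq\sum_i t_iA_i$, and chain degenerate ellipticity of $G$, concavity of $G$ in $X$, and the super-solution property of each $v_i$ to reach $G(\xi,A-\varepsilon A^2)\geq-\sum_i t_i\eigen(\Omega_i)$, finally sending $\varepsilon\to 0$. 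Your additional remarks on the closed sub-jets, the common nonzero gradient, and the role of the hypothesis $F_*(0,0)=0$ (used only to invoke Lemma~\ref{l:equivalence} and reduce to testing with $\xi\neq 0$) are accurate and, if anything, make explicit points the paper leaves implicit.
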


\begin{proof} 
From Proposition \ref{p:bordo}, we know that $w$ is continuous, exact, and satisfies $w \to + \infty $ as  $x \to \partial \Omega _{\blambda}$.  In order to check that $w$ is a viscosity super-solution to 
$G(\nabla w, D^2 w) = - \sum _{i=0} ^k t_i \eigen (\Omega _i) $ in $\Omega_{\blambda}$, we use the definition \`a la Birindelli-Demengel.  
Let $x\in\Omega_{\blambda}$. If $w$ is constant on a ball centered at $x$, we have nothing to check.
Otherwise,  let $(\xi, A) \in J^{2-} w(x)$, with $\xi \neq 0$. Let $x _i \in \Omega _i$ be such that $\eqref{f:att}$ holds.
By Proposition~\ref{p:ALL0}, there exist
$A_0, \ldots, A_k\in S^n$ such that
$(\xi, A_i) \in \overline{J}^{2-} v_i(x_i)$,
satisfying \eqref{f:matineq0}.
Hence,
\[
\begin{split}
G(\xi, A - \varepsilon\, A^2)  \geq
G\left(\xi, \sum_{i=0}^k t_i A_i\right)
\geq
\sum_{i=0}^k t_i G (\xi, A_i)
\geq - \sum _{i=0} ^k t_i \eigen (\Omega _i) \, ,
\end{split}
\]
where in the first inequality we have used the fact that $G$ is degenerate elliptic, 
in the second one the fact that it is concave in $X$  ({\it cf.} Remark~\ref{r:G}), and in the third one the fact that the $v_i$'s are super-solutions to 
$G(\nabla v_i, D^2 v_i) = - \eigen (\Omega _i)$. 

Passing to the limit as $\varepsilon\to 0$ we conclude that
$G (\xi, A) \geq - \sum _{i=0} ^k t_i \eigen (\Omega _i) $.
\end{proof}

\bigskip
\begin{remark}\label{r:kuhn} We warn the reader that the above proof cannot be successfully concluded if one 
adopts the definition of viscosity super-solution \`a la Crandall-Ishii-Lions. Indeed, in this case, one would need to use the concavity
of the upper semicontinuous envelope $G^*$. But, in general, 
the concavity of $G$ is not inherited by $G ^*$  (for instance, 
in case of the normalized $p$-Laplacian, 
one can easily check that $G^*$ fails to be concave). This sheds some light on the importance of the equivalence Lemma \ref{l:equivalence}. 
\end{remark}

\section{Existence of eigenfunctions for domains in $\clset$}
\label{ss:existence}

In this section we prove the existence of eigenfunctions
for operators $F$ satisfying assumptions (H1)-(H2) on domains belonging to the class $\clset$
defined in \eqref{f:sets} (see Theorem~\ref{t:approx}),
along with their global H\"older continuity (see Proposition~\ref{p:holder}).
We remark that the restriction $\alpha > -1$ in (H1) is fundamental
for the proof of Lemma~\ref{l:barr} below, and hence also for 
the subsequent results.
For domains of class $C^2$,
the corresponding results have been proved
in \cite[Theorem~5.5 and~4.1]{BiDe2006} 
(see also \cite[Theorem~8 and Proposition~6]{BiDe2007b}).

\smallskip

We recall that, for any Lipschitz domain $\Omega$,  denoting by
$d _\Omega$ the distance function from the boundary
\[
d_\Omega (x) := \min_{y\in\partial\Omega} |y-x|,
\qquad x\in\R^n,
\]
the following properties are equivalent
(see e.g.\ \cite{CSW,CoTh,CFb}):
\begin{itemize}
\item[(a)] $\Omega\in\clset$;
\item[(b)] 
there exists $r>0$ such that
the distance function $d_{{\Omega}}$ is differentiable
at any point of the exterior tubular neighborhood
\[
\mathcal{N}_r := \{x\in\R^n \setminus \Omega :\ 0 < d_{{\Omega}}(x) < r\};
\]
\item[(c)] $\Omega$ is a set of \textsl{positive reach},
i.e.\ there exists $r>0$ such that
every point $x\in \mathcal{N}_r$ admits a unique projection on
$\overline{\Omega}$.
\end{itemize}

These properties are clearly satisfied if $\Omega$ is of class $C^2$ or
if $\Omega$ is a convex set.

\smallskip
Let us also recall that, 
if  $\Omega\in\clset$, the distance function $d_\Omega$ 
is semiconcave in $\overline{\Omega}$, i.e.\
there exists a constant $\kappa>0$ such that
the map 
$x\mapsto d_\Omega(x) - \frac{\kappa}{2}|x|^2$ 
is concave in $\overline{\Omega}$ (see \cite[Proposition~2.2(iii)]{CaSi}). 
The constant $\kappa$ is called a semiconcavity constant for $d _\Omega$,
and can be chosen equal to the reciprocal of the radius in
the uniform external sphere condition. 

As a consequence of  the  semiconcavity of $d _\Omega$, for any $\Omega$  in $\clset$  and any function $f$ bounded in $\overline \Omega$, we are able to construct a barrier for sub-solutions to  
\begin{equation}
\label{f:equaf}
\begin{cases}
F(\nabla u, D^2 u) = f(x),
&\text{in}\ \Omega,
\\
u =0
&\text{on}\ \partial\Omega.
\end{cases}
\end{equation}

We prove: 
\begin{lemma}
\label{l:barr}
Let $\Omega\in\clset$,
let $F$ satisfy (H1)-(H2), and let
$f$ be a bounded function
in $\overline{\Omega}$.
Then, for every upper semicontinuous sub-solution $u$ of \eqref{f:equaf}
and every $\gamma\in(0,1)$,
there exist constants $M, \delta > 0$, depending only on the semiconcavity constant of $d _\Omega$  and on the structural constants of $F$,  
such that
\[
u(x) \leq M\, d _\Omega (x)^\gamma,
\qquad
\forall x\in\overline{\Omega}\ \text{such that}\ d_\Omega (x) \leq \delta.
\]
\end{lemma}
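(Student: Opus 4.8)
The plan is to build an explicit barrier of the form $\psi(x) = M\, d_\Omega(x)^\gamma$ near the boundary and use the comparison principle (the maximum principle implicit in the viscosity machinery, or a direct touching argument) to dominate the sub-solution $u$ by it. First I would use the semiconcavity of $d_\Omega$: since $d_\Omega(x) - \frac{\kappa}{2}|x|^2$ is concave in $\overline\Omega$, at any point $x$ where $d_\Omega$ is touched from below by a smooth test function, its Hessian (in the viscosity sense) satisfies $D^2 d_\Omega(x) \leq \kappa I$; moreover $|\nabla d_\Omega(x)| = 1$ wherever the gradient exists, and in the relevant region (close to $\partial\Omega$, away from the cut locus) $d_\Omega$ is indeed differentiable with unit gradient. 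This is the structural fact that makes $d_\Omega$ a usable ingredient.

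Next I would compute, at least formally and then justify in the viscosity sense, the action of $F$ on $\psi = M\, d_\Omega^\gamma$. Writing $d = d_\Omega$, one has $\nabla \psi = M\gamma d^{\gamma-1}\nabla d$ and $D^2\psi = M\gamma d^{\gamma-1} D^2 d + M\gamma(\gamma-1) d^{\gamma-2}\nabla d\otimes\nabla d$. Plugging into $F$ and using homogeneity (H1) with the factor $|\nabla\psi|^\alpha = (M\gamma)^\alpha d^{(\gamma-1)\alpha}$, together with the ellipticity bounds (H2) applied to the two pieces of $D^2\psi$, the dominant term as $d\to 0$ is the one carrying $d^{\gamma-2}$ coming from $\nabla d\otimes\nabla d$; because $\gamma < 1$ this term has a definite sign (the coefficient $\gamma(\gamma-1)<0$), and after multiplying through by the positive homogeneity factor the leading behaviour is like $-(\text{const})\, M^{\alpha+1} d^{(\gamma-1)(\alpha+1)-1}$, which blows up to $-\infty$ as $d\to 0$ since $(\gamma-1)(\alpha+1)-1<0$ — here the hypothesis $\alpha>-1$ is exactly what guarantees the sign and the blow-up rate, matching the remark in the text. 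The semiconcavity bound $D^2 d \leq \kappa I$ controls the subdominant $d^{\gamma-1}$ term, and $f$ is bounded, so for $M$ large enough and $d \leq \delta$ small enough we get $F_*(\nabla\psi, D^2\psi) \leq f(x)$ fails in the direction we want — more precisely we arrange $F(\nabla\psi,D^2\psi) < f(x)$ so that $\psi$ is a strict super-solution of \eqref{f:equaf} in the shell $\{0<d<\delta\}$.

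Then I would run the comparison: consider the shell $\Omega_\delta := \{x\in\Omega :\ d_\Omega(x) < \delta\}$, whose boundary splits into $\partial\Omega$ (where $u \leq 0 = \psi$) and the inner part $\{d_\Omega = \delta\}$ (where $\psi = M\delta^\gamma$, which we can make larger than $\sup_\Omega u$ by further increasing $M$ if needed — or, better, choose $M$ depending only on structural data by first fixing $\delta$ from the blow-up estimate and then $M$ from both the strict-supersolution requirement and the boundary domination, so that the claimed dependence is respected). Since $u$ is an u.s.c. sub-solution and $\psi$ a (strict, smooth away from the cut locus) super-solution with $u \leq \psi$ on $\partial\Omega_\delta$, the comparison principle gives $u \leq \psi$ in $\Omega_\delta$, which is the assertion. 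One technical point to handle carefully: $d_\Omega$ is only semiconcave, not $C^2$, so $\psi$ is not a classical test function everywhere; I would address this either by a standard viscosity argument (at a point where a smooth $\varphi$ touches $u - \psi$ from above, one produces a smooth lower test function for $d_\Omega$ whose Hessian obeys the semiconcavity bound, and feeds it into the definition of sub-solution), or by using sup-convolution regularisations of $d_\Omega$ with the $\kappa I$ Hessian bound preserved.

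The main obstacle I anticipate is precisely this low regularity of $d_\Omega$: making the formal computation of $F(\nabla\psi, D^2\psi)$ rigorous in the viscosity framework, since one cannot literally differentiate $d_\Omega$ twice. The cleanest route is probably to note that semiconcavity gives, at every point of $\overline\Omega$, a smooth function touching $d_\Omega$ from above with Hessian $\leq \kappa I + o(1)$, and to transport the inequality through $F$ using degenerate ellipticity (H2)'; the sign of the $d^{\gamma-2}$ term is robust under this one-sided control because that term's coefficient is negative and it comes from $\nabla d\otimes\nabla d$ with $|\nabla d|=1$, not from $D^2 d$. A secondary nuisance is bookkeeping the constants so that $M$ and $\delta$ genuinely depend only on $\kappa$ and the structural constants $c, C, \alpha$ of $F$ (and on $\|f\|_\infty$, $\gamma$), which the statement allows; this is just careful tracking but worth doing explicitly.
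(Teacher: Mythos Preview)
Your strategy is exactly the paper's: build the barrier $g = M\, d_\Omega^\gamma$, show it is a viscosity super-solution of $F(\nabla g, D^2 g) \geq M^{\alpha+1}\varepsilon$ in the shell $\{0 < d_\Omega < \delta\}$ via (H1)--(H2) and the semiconcavity bound on $d_\Omega$, then apply the comparison result of Birindelli--Demengel on $\Omega_\delta$ with boundary data matched as you describe (including the choice of $M$ large enough to dominate $u$ on $\{d_\Omega = \delta\}$).

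Two small corrections. First, your sign is flipped: since $F$ is decreasing in $X$ and the dominant piece $M\gamma(\gamma-1)\,d^{\gamma-2}\,\nabla d\otimes\nabla d$ of $D^2\psi$ is large \emph{negative}, $F(\nabla\psi, D^2\psi)$ blows up to $+\infty$ (not $-\infty$), and the super-solution condition you want is $F(\nabla\psi, D^2\psi) \geq f$ (not $< f$); your subsequent use of $\psi$ as an upper barrier is nonetheless correct. Second, the regularity issue is lighter than you fear: because $t\mapsto M t^\gamma$ is increasing, $(\zeta,X)\in J^{2,-}\psi(x)$ if and only if $(\xi,A)\in J^{2,-}d_\Omega(x)$ with the obvious chain-rule relation, and semiconcavity of $d_\Omega$ directly yields $A\leq\kappa I$ and $|\xi|=1$ for every such sub-jet pair. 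No sup-convolution regularisation or upper-touching function is needed --- at points where the sub-jet of $\psi$ is empty there is simply nothing to check.
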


\begin{proof} 
Throughout the proof we write for brevity $d$ in place of $d_\Omega$. 
If $\kappa >0 $ is a semiconcavity constant for $d$, since the map 
$x\mapsto d(x) - \frac{\kappa}{2}|x|^2$ 
is concave in $\overline{\Omega}$, we have
\begin{equation}
\label{f:subd}
x\in\Omega, \quad
(\xi, A) \in J^{2,-} d(x)
\qquad\Longrightarrow\qquad
\nabla d (x) = \xi,
\
A\leq \kappa\, I.
\end{equation}

Let $\gamma\in (0,1)$ be fixed, and let us consider the function
$g(x) := M\, d(x)^\gamma$, where $M>0$ is a constant that will be 
determined later.
For every $x\in\Omega$, by \eqref{f:subd} we have that
$(\xi, A)\in J^{2,-} d(x)$ if and only if
$(\zeta, X)\in J^{2,-} g(x)$, with
\[
\zeta = M\gamma d(x)^{\gamma-1} \xi,
\quad
X = M\gamma d(x)^{\gamma -2} 
\left[(d(x)\, A + (\gamma-1)\xi \otimes \xi\right],
\quad A\leq \kappa\, I,
\quad |\xi| = 1,
\]
and, in this case, both $d$ and $g$ are differentiable at $x$,
with $\nabla d(x) = \xi$ and $\nabla g(x) = \zeta\neq 0$.

Hence, if $x\in\Omega$ 
and $(\zeta, X)\in J^{2,-} g(x)$,
from (H1)-(H2) it holds
\[
\begin{split}
F(\zeta, X) & =
(M\,\gamma)^{\alpha + 1} d(x)^{(\alpha+1)\gamma-\alpha-2}\,
F(\xi, d(x) A + (\gamma-1) \xi\otimes \xi)
\\ & \geq
(M\,\gamma)^{\alpha + 1} d(x)^{(\alpha+1)\gamma-\alpha-2}\,
F(\xi, \kappa\, d(x)\, I - (1-\gamma) \xi\otimes \xi)
\\ & \geq
(M\,\gamma)^{\alpha + 1} d(x)^{(\alpha+1)\gamma-\alpha-2}\,
\left[
c(1-\gamma) - C\,n\,\kappa\,d(x)
\right]\,,
\end{split}
\]
where in the last inequality we have used the fact that $|\xi|=1$.

Since the exponent $[(\alpha+1)\gamma-\alpha-2]$ is negative,  if we choose $\delta < c(1-\gamma) / (C\,n\,\kappa)$, 
we conclude that there exists $\varepsilon > 0$,
depending only on $\gamma$ and $\kappa$
(and on the structural constants of $F$),
such that
\[
F(\zeta, X) \geq M^{\alpha+1}\varepsilon,
\qquad
\forall\ (\zeta, X)\in J^{2,-} g(x),
\ \text{with}\ x\in\Omega,\ d(x)\leq\delta.
\]
In other words, $g$ is a positive supersolution of the equation
\[
F(\nabla g, D^2 g) \geq M^{\alpha+1}\varepsilon
\qquad
\text{in}\
\Omega_{\delta} := \{x\in\Omega:\ d(x) < \delta\}.
\]
Finally, we can now choose
\[
M := \max\left\{
\delta^{-\gamma}\max_{x\in\overline{\Omega}_\delta} u \, ,
\, \left(\frac{\|f\|_\infty}{\varepsilon}\right)^{\frac{1}{\alpha+1}} + 1
\right\}\,,
\]
so that $g \geq u$ on $\partial\Omega_\delta$
and $|f(x)| < M^{\alpha+1}\varepsilon$ for every $x\in\overline{\Omega}_\delta$,
hence the claim follows from the comparison result proved in
\cite[Theorem~3.6]{BiDe2006}.
\end{proof}

\bigskip
We can now derive a global H\"older estimate:

\begin{proposition}
\label{p:holder}
Let $\Omega\in\clset$,
let $F$ satisfy (H1)-(H2), let $f$ be a bounded function
in $\overline{\Omega}$,
and let $u$ be a non-negative bounded viscosity solution of~\eqref{f:equaf}.

Then, for every $\gamma\in (0,1)$ there exists a constant
$H>0$, depending only on $\gamma$, $\|f\|_\infty$
and the semiconcavity constant of $d_\Omega$,
such that
\[
|u(x) - u(y)|
\leq H \, |x-y|^\gamma,
\qquad
\forall x,y\in\overline{\Omega}.
\]
\end{proposition}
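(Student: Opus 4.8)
The plan is to derive the global H\"older bound by combining the boundary decay of $u$, already available from Lemma~\ref{l:barr}, with an interior Lipschitz estimate, and then patching the two through a dichotomy on the size of $d_\Omega(x)$ relative to $|x-y|$. For the first ingredient, since $u$ is in particular an upper semicontinuous sub-solution of \eqref{f:equaf}, Lemma~\ref{l:barr} furnishes constants $M,\delta>0$ (depending on $\gamma$, $\|f\|_\infty$, $\|u\|_\infty$ and the semiconcavity constant of $d_\Omega$) with $0\le u(x)\le M\,d_\Omega(x)^\gamma$ for every $x\in\overline\Omega$ with $d_\Omega(x)\le\delta$, the lower bound being the hypothesis $u\ge0$ together with $u=0$ on $\partial\Omega$. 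For the second ingredient, I would invoke the interior regularity theory for fully nonlinear operators satisfying (H1)--(H2): solutions are $C^1_{\mathrm{loc}}(\Omega)$, and from the natural scaling $z\mapsto\rho^{-1}u(x_0+\rho z)$ of the equation one gets, for every ball with $B_{2\rho}(x_0)\subset\Omega$, the interior gradient bound $\sup_{B_\rho(x_0)}|\nabla u|\le C\big(\rho^{-1}\sup_{B_{2\rho}(x_0)}|u|+\rho^{1/(\alpha+1)}\|f\|_\infty^{1/(\alpha+1)}\big)$ with $C$ a structural constant.

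The heart of the proof is then the following case distinction for $x,y\in\overline\Omega$, writing $r=|x-y|$ and assuming without loss of generality $d_\Omega(x)\ge d_\Omega(y)$. If $d_\Omega(x)\le 4r$ (\emph{boundary regime}), then also $d_\Omega(y)\le 4r$, so when $4r\le\delta$ the barrier gives $u(x),u(y)\le M(4r)^\gamma$ and hence $|u(x)-u(y)|\le u(x)+u(y)\le 2M(4r)^\gamma$ (here the sign condition on $u$ is crucial), while when $4r>\delta$ one simply writes $|u(x)-u(y)|\le 2\|u\|_\infty\le 2\|u\|_\infty(\delta/4)^{-\gamma}r^\gamma$. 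If instead $d_\Omega(x)>4r$ (\emph{interior regime}), I would set $\rho:=d_\Omega(x)/4>r$, so that $B_{2\rho}(x)\subset\Omega$ and $y\in B_\rho(x)$, whence $|u(x)-u(y)|\le r\sup_{B_\rho(x)}|\nabla u|$ and the interior estimate above applies; bounding $\sup_{B_{2\rho}(x)}|u|$ by $M(3d_\Omega(x)/2)^\gamma$ when $B_{2\rho}(x)$ still lies in the collar $\{d_\Omega\le\delta\}$, one finds $r\,\rho^{-1}\sup|u|\lesssim M\,r\,d_\Omega(x)^{\gamma-1}\lesssim M\,r^\gamma$ by $\gamma<1$ and $d_\Omega(x)>4r$, the $f$-term being $\lesssim r^\gamma$ since $\rho\lesssim\delta$; when instead $B_{2\rho}(x)$ reaches into $\{d_\Omega>\delta\}$ one uses $\sup|u|\le\|u\|_\infty$, $\rho^{-1}\lesssim\delta^{-1}$ and $r\le\operatorname{diam}\Omega$. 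Assembling the cases yields the asserted inequality $|u(x)-u(y)|\le H\,|x-y|^\gamma$ with $H$ controlled by $\gamma$, $\alpha$, $\|f\|_\infty$, $\|u\|_\infty$, $\operatorname{diam}\Omega$ and the semiconcavity constant of $d_\Omega$.

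The point that requires care is the balancing in the interior regime: the interior gradient bound blows up like $d_\Omega(x)^{-1}$ as $x\to\partial\Omega$, and this singularity has to be exactly compensated by the $d_\Omega(x)^\gamma$ decay of $u$ from Lemma~\ref{l:barr} combined with the constraint $d_\Omega(x)>4r$ of that regime; it is precisely $\gamma<1$ that makes $d_\Omega(x)^{\gamma-1}\le(4r)^{\gamma-1}$ and closes the estimate, so no better H\"older exponent is accessible by this route (consistently with the statement). I also note that one should use interior $C^1$ regularity rather than merely interior H\"older regularity with the Krylov--Safonov exponent, since the target exponent $\gamma$ is arbitrary in $(0,1)$; this is legitimate because (H1)--(H2) place us within the scope of the known $C^{1,\beta}$ interior theory for such degenerate/singular operators. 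Finally, if one did not want to use $u\ge0$, the boundary regime could instead rely on the two-sided bound $|u|\le M\,d_\Omega^\gamma$ obtained by applying Lemma~\ref{l:barr} both to $u$ and to $-u$, the latter being a sub-solution of an equation of the same structural type with $F$ replaced by $(\xi,X)\mapsto-F(-\xi,-X)$.
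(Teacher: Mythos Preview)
Your decomposition into a boundary regime (controlled by Lemma~\ref{l:barr}) and an interior regime (controlled by a gradient bound) is a natural strategy, but there is a genuine gap at the interior step. You invoke an interior $C^1$ estimate of the form
\[
\sup_{B_\rho(x_0)}|\nabla u|\le C\Big(\rho^{-1}\sup_{B_{2\rho}(x_0)}|u|+\rho^{1/(\alpha+1)}\|f\|_\infty^{1/(\alpha+1)}\Big),
\]
asserting that (H1)--(H2) alone place us within the scope of the known $C^{1,\beta}$ interior theory. This is not justified: hypotheses (H1)--(H2) impose no regularity whatsoever on $F$ in the gradient variable $\xi$ (the uniform ellipticity bound in (H2) compares values of $F$ at \emph{fixed} $\xi$), and under such generality neither interior $C^1$ nor even local Lipschitz regularity is available. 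The paper itself confirms this in the Remark immediately following the proposition: to upgrade from H\"older to local Lipschitz one must assume an additional H\"older continuity of $F$ in $\xi$ (the condition with exponent $\mu>1/2$ there), and then argue as in \cite[Theorem~4.2]{BiDe2006}. Without that extra hypothesis your interior-regime bound is unfounded, and the patching argument breaks down precisely at the sentence ``this is legitimate''.

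The paper's own proof avoids this obstacle by not passing through gradient bounds at all: it invokes Proposition~6 of \cite{BiDe2007b} (equivalently \cite[Theorem~4.1]{BiDe2006}), which establishes global $C^\gamma$ regularity for every $\gamma\in(0,1)$ directly via an Ishii--Lions doubling-of-variables argument, the boundary barrier of Lemma~\ref{l:barr} being used only to force the maximum of the doubled function to fall in the interior. If you wish to salvage your scheme, the fix is to replace the gradient estimate by a scaled interior H\"older estimate $[u]_{C^\gamma(B_\rho)}\lesssim \rho^{-\gamma}\|u\|_{L^\infty(B_{2\rho})}+\cdots$; your patching then closes cleanly since $\rho^{-\gamma}\cdot d_\Omega(x)^\gamma$ is bounded in the interior regime. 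But proving that interior $C^\gamma$ bound for \emph{arbitrary} $\gamma\in(0,1)$ under (H1)--(H2) already requires the Ishii--Lions machinery, so you would effectively be reproducing the Birindelli--Demengel argument in a less direct form.
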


\begin{proof} 
Thanks to Lemma~\ref{l:barr},
the result can be obtained following line by line
the proof of Proposition~6 in \cite{BiDe2007b}
(see also \cite[Theorem~4.1]{BiDe2006}).
\end{proof}

\begin{remark}
As a consequence of the global H\"older estimate given in Proposition \ref{p:holder}, 
it is possible to obtain also 
a local Lipschitz regularity result. More precisely, 
under the hypotheses of Proposition~\ref{p:holder},
assume in addition that $F$ sastisfies the following H\"older continuity assumption with respect to $\xi\neq 0$:
there exist $\mu\in \left(1/2, 1\right]$ and $K>0$ such that
\[
|F(\xi+\zeta, X) - F(\xi, X)| \leq
K |\zeta|^\mu |X|,
\qquad
\forall\
|\xi| = 1,\ |\zeta| < 1/2,\ X\in S^n\,.
\]
Then, by arguing as in Theorem~4.2 of \cite{BiDe2006}, one can see that every non-negative bounded viscosity solution of~\eqref{f:equaf}
is locally Lipschitz continuous in $\Omega$.
\end{remark}

\bigskip

Finally, thanks to Proposition \ref{p:holder} we are in a position to give

\begin{theorem}\label{t:approx}
Let $\Omega\in\clset$
and let $F$ satisfy (H1)-(H2).
Then for $\lambda = \eigen(\Omega)$ the eigenvalue problem \eqref{f:pb}
admits a positive viscosity solution $u$.
Moreover, $u$  can be obtained as the uniform limit
of a sequence of positive eigenfunctions $\{u _k\}$, associated with an increasing sequence 
of smooth domains $\{\Omega_k\}$
such that
\[
\bigcup_k \Omega_k = \Omega,
\qquad
\lim_{k\to + \infty} \eigen(\Omega_k) = \eigen(\Omega)\,.
\]
\end{theorem}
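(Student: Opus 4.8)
The plan is to realize the eigenfunction on $\Omega$ as a uniform limit of eigenfunctions on an interior \emph{smooth} exhaustion of $\Omega$, the key being a barrier near $\partial\Omega$ which is uniform with respect to the exhaustion, provided by Lemma~\ref{l:barr}. First I would fix an increasing sequence of bounded connected open sets $\Omega_k$ of class $C^2$ with $\overline{\Omega_k}\subset\Omega_{k+1}$ and $\bigcup_k\Omega_k=\Omega$ (obtained for instance as smooth sublevel sets of a mollification of $d_\Omega$). By the Birindelli--Demengel existence theorem for $C^2$ domains (\cite[Theorem~5.5]{BiDe2006}; see also \cite[Theorem~8]{BiDe2007b}), for every $k$ there is a positive viscosity eigenfunction $u_k$ for $\lambda=\eigen(\Omega_k)$, normalized so that $\max_{\overline{\Omega_k}}u_k=1$. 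Since restricting a positive supersolution to a smaller domain still yields a positive supersolution, $\eigen(\cdot)$ is non-increasing under inclusion, whence $0<\eigen(\Omega)\le\eigen(\Omega_{k+1})\le\eigen(\Omega_k)\le\eigen(\Omega_1)<\infty$, and $\eigen(\Omega_k)\downarrow\ell$ for some $\ell\ge\eigen(\Omega)$.

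The central point is a $k$-independent boundary estimate. Extending $u_k$ by $0$ on $\overline\Omega\setminus\Omega_k$, I claim the resulting upper semicontinuous function is a sub-solution of $F(\nabla u,D^2u)=f_k$ in $\Omega$, with $f_k:=\eigen(\Omega_k)u_k^{\alpha+1}$ satisfying $0\le f_k\le\eigen(\Omega_1)$, and with $u_k=0$ on $\partial\Omega$: inside $\Omega_k$ this is the equation solved by $u_k$, while at any point $x\in\overline\Omega\setminus\Omega_k$ a test function touching $u_k$ from above has a local minimum at $x$, hence vanishing gradient and nonnegative Hessian there, and one checks from (H1)--(H2)' that $F_*(0,Y)\le 0$ for every $Y\ge 0$ (because $F(\xi,0)=0$ and $F$ is degenerate elliptic), so $F_*(0,D^2\varphi(x))\le 0=f_k(x)$. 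Applying Lemma~\ref{l:barr} to this sub-solution \emph{on the fixed domain $\Omega$}, I get constants $M,\delta>0$ depending only on $\gamma$, on the semiconcavity constant of $d_\Omega$, on $\eigen(\Omega_1)$ and on the structural constants of $F$ --- in particular independent of $k$ --- such that $u_k(x)\le M\,d_\Omega(x)^\gamma$ whenever $d_\Omega(x)\le\delta$. Coupling this uniform boundary decay with the interior H\"older estimates for fully nonlinear equations with bounded right-hand side, exactly as in the proof of Proposition~\ref{p:holder}, yields a constant $H$ independent of $k$ with $|u_k(x)-u_k(y)|\le H|x-y|^\gamma$ for all $x,y\in\overline\Omega$.

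By Ascoli--Arzel\`a a subsequence $u_k\to u$ uniformly on $\overline\Omega$, with $u\in C^{0,\gamma}(\overline\Omega)$, $0\le u\le1$ and $u=0$ on $\partial\Omega$. Since $\eigen(\Omega_k)\to\ell$, the stability of viscosity solutions under uniform convergence (\cite{CIL}) gives that $u$ solves $F(\nabla u,D^2u)=\ell\,u^{\alpha+1}$ in $\Omega$. Moreover $u\not\equiv0$: if $u_k(x_k)=1$, the uniform barrier forces $d_\Omega(x_k)\ge(1/M)^{1/\gamma}$, so along a subsequence $x_k\to\bar x\in\Omega$ with $u(\bar x)=1$; a strong minimum principle for nonnegative viscosity supersolutions under (H1)--(H2) (Birindelli--Demengel, \cite{BiDe2007b}) then gives $u>0$ in $\Omega$. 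Finally, $u$ being a positive viscosity supersolution of $F(\nabla u,D^2u)=\ell\,u^{\alpha+1}$ forces $\ell\le\eigen(\Omega)$ by the very definition of $\eigen(\Omega)$, hence $\ell=\eigen(\Omega)$; thus $u$ is a positive viscosity solution of \eqref{f:pb} with $\lambda=\eigen(\Omega)$, obtained as the uniform limit of the $u_k$ on the smooth domains $\Omega_k$ with $\eigen(\Omega_k)\to\eigen(\Omega)$, and relabelling the subsequence completes the proof.

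The step I expect to be the main obstacle is the uniform barrier: one must prevent the boundary estimate for $u_k$ from degenerating as $\partial\Omega_k$ approaches $\partial\Omega$. The device that resolves this is precisely to regard $u_k$, extended by zero, as a sub-solution on the \emph{fixed} set $\Omega\in\clset$, so that Lemma~\ref{l:barr} can be invoked with the semiconcavity constant of $d_\Omega$ --- an intrinsic quantity of $\Omega$ --- rather than with the uncontrolled semiconcavity constants of the smooth approximants $\Omega_k$.
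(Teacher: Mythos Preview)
Your proof is correct and follows the same skeleton as the paper's --- smooth exhaustion, normalized eigenfunctions, uniform H\"older bound, Ascoli--Arzel\`a, stability, strong minimum principle, identification of the limit eigenvalue --- but you obtain the uniform H\"older bound by a genuinely different device.

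The paper chooses the approximating domains $\Omega_k$ so that they \emph{themselves} satisfy a uniform exterior sphere condition with a radius $r$ independent of $k$ (which can be arranged because $\Omega$ has positive reach), and then applies Proposition~\ref{p:holder} directly on each $\Omega_k$; the H\"older constant depends only on $r$ and is therefore uniform. You instead keep the $\Omega_k$ arbitrary ($C^2$ suffices), extend $u_k$ by zero to $\overline\Omega$, observe that the extension is a viscosity sub-solution of $F(\nabla u,D^2 u)=f_k$ on the \emph{fixed} domain $\Omega$ (a clean argument: any test function touching from above at a point where the extension vanishes must have a local minimum there, since $u_k\ge 0$), and invoke Lemma~\ref{l:barr} once on $\Omega$ with its intrinsic semiconcavity constant. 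This buys you freedom in the construction of $\Omega_k$, at the price of a small caveat you leave implicit: the ``interior'' H\"older estimate must be applied on the compact set $\{d_\Omega\ge\delta/2\}$, which is contained in $\Omega_k$ (where $u_k$ is an actual \emph{solution}, not merely a sub-solution) only for $k$ large. Since Ascoli--Arzel\`a requires equicontinuity only along a tail, this is harmless. Both routes are valid; the paper's is more packaged (one quotes Proposition~\ref{p:holder} verbatim on each $\Omega_k$), while yours avoids the auxiliary claim that the $\Omega_k$ can be chosen with a common exterior sphere radius.
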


\begin{proof}
Since $\Omega$ satisfies a uniform exterior sphere condition, we can construct a sequence of smooth ($C^\infty$) domains
$\{\Omega_k\}$,
still satisfying a uniform sphere condition 
(possibly with a smaller radius $r$, independent of $k$),
such that
$\overline{\Omega}_k\subset \Omega_{k+1}$ and
$\bigcup\Omega_k = \Omega$.
This can be achieved by a standard regularization argument, 
i.e.\ by mollifying the function whose graph locally defines the boundary of $\Omega$. 

For every $k\in\N$, let  now $u_k$ be a positive eigenfunction in $\Omega_k$,
normalized by $\|u_k\|_\infty = 1$, and let us extend it in $\overline{\Omega}$
by setting $u_k = 0$ in $\overline{\Omega}\setminus\Omega_k$.

Let us fix $\gamma\in (0,1)$.
By Proposition~\ref{p:holder}, there exists a constant
$H > 0$, depending only on $r$, such that
\[
|u_k(x) - u_k(y)| \leq H |x-y|^\gamma,
\qquad
\forall x,y\in\overline{\Omega},
\quad
\forall k\in\N.
\]
Hence, by the Ascoli--Arzelà theorem, from $\{u_k\}$
we can extract a subsequence that converges uniformly in
$\overline{\Omega}$ to some continuous function $u$. Moreover, by monotonicity, the sequence $\eigen (\Omega _k)$ converges decreasingly to some limit $L$.  
Thus the function  $u$ is a non-negative viscosity solution to the equation  
$F (\nabla u, D ^ 2 u ) = L |u | ^ \alpha u$ in $\Omega$. 
Since $u\geq 0$ and $u\not\equiv 0$, 
by the strict maximum principle proved in \cite[Theorem~2]{BiDe2007b}
we deduce that $u>0$ in $\Omega$.
By definition of $\eigen (\Omega)$, this gives the inequality  
$\eigen (\Omega ) \geq L$. 
On the other hand, since $\Omega _k \subset \Omega$, we have $\eigen (\Omega ) \leq \eigen (\Omega _k)$ and hence in the limit $\eigen (\Omega) \leq L$, 
so that $u$ is a positive eigenfunction
associated with $\eigen(\Omega)$.
\end{proof}

\section{Proofs of Theorems \ref{t:BM} and \ref{t:log}}
\label{sec:proofs}

\subsection{Proof of Theorem \ref{t:BM}}
\label{ss:BM}
First of all we observe that it is enough to prove the inequality
\begin{equation}\label{f:BM}
\eigen \big ((1-t) \Omega _0 + t \Omega _1 \big )
\leq
(1-t)\eigen(\Omega_0) + t\, \eigen(\Omega_1),
\qquad
\forall t\in [0,1].
\end{equation}
Indeed, by a standard argument,
the Brunn--Minkowski inequality \eqref{f:BMo} follows from \eqref{f:BM}
and the fact that
\[
\eigen(k\, \Omega) = \frac{1}{k^{\alpha+2}}\, \eigen(\Omega),
\qquad \forall k>0.
\]
Namely, it is enough to apply \eqref{f:BM} with
\[
t' = \frac{t\,\eigen(\Omega_1)^{-1/(\alpha+2)}}{(1-t)\, \eigen(\Omega_0)^{-1/(\alpha+2)} + t\, \eigen(\Omega_1)^{-1/(\alpha+2)}}\,,
\qquad
\Omega_i' = \eigen(\Omega_i)^{1/(\alpha+2)}\, \Omega_i,
\ i = 0,1.
\]

Let us prove \eqref{f:BM}.
For $i=0,1$, thanks to Theorem \ref{t:approx}, there exists a positive eigenfunction  $u_i$ 
associated with $\eigen(\Omega_i)$, i.e.\
a positive function in
$C(\overline{\Omega}_i)$ which is a viscosity solution to
\[
\begin{cases}
F(\nabla u_i, D^2 u_i) = \eigen(\Omega_i)\, u_i^{\alpha+1}
&\text{in}\ \Omega_i,\\
u_i = 0
&\text{on}\ \partial\Omega_i.
\end{cases}
\]

By Lemma \ref{l:log}, for $i=0,1$, the function $v_i := -\log u_i$
is a viscosity super-solution of
$$\begin{cases}
G(\nabla v_i, D^2 v_i) = -\eigen(\Omega_i)
&\text{in}\ \Omega_i,\\
v_i \to +\infty
&\text{on}\ \partial\Omega_i,
\end{cases}
$$
where $G$ is the function defined in \eqref{f:H}.

Let $w\colon\Omega_{\blambda}\to\R$ be the infimal convolution
of $v_0, v_1$ with coefficients $\blambda=  (1-t, t)$,  in $\Omega_{\blambda} = (1-t) \Omega_0 + t \Omega_1$, {\it i.e.},
\[
w(x) :=
\inf \big \{
(1-t)v_0(x_0) + t\, v_1(x_1):\
x_0\in\Omega_0,\
x_1\in\Omega_1,\
x = (1-t)x_0 + t\, x_1
\big \}\,.
\]
By Proposition \ref{p:concave}, $w$ is a  viscosity super-solution to
\[
\begin{cases}
G(\nabla w, D^2 w) = - \big [(1-t)\eigen(\Omega_0) + t\, \eigen(\Omega_1)\big ]  & \text{in}\ \Omega_{\blambda},\\
w \to +\infty &\text{on}\ \partial\Omega_{\blambda}\,.
\end{cases}
\]

Let us define
$\ubar\colon\overline{\Omega}_{\blambda}\to \R$ as
$\ubar(x) := e^{-w(x)}$ for every $x\in\Omega_{\blambda}$,
$\ubar(x) = 0$ for every $x\in\partial\Omega_{\blambda}$.
Clearly, $\ubar > 0$ in $\Omega_{\blambda}$
and $\ubar \in C(\overline{\Omega}_{\blambda})$.

Moreover, applying again Lemma \ref{l:log}, we infer that
$\ubar$ is a viscosity sub-solution to
\[
F(\nabla \ubar, D^2 \ubar) = [(1-t)\eigen(\Omega_0) + t\, \eigen(\Omega_1)]\,\ubar^{\alpha+1}
\qquad
\text{in}\ \Omega_{\blambda}.
\]

\smallskip
Since $\ubar > 0$ in $\Omega_{\blambda}$ and $\ubar = 0$ on $\partial\Omega_{\blambda}$,
by Theorem \ref{t:birdem} we conclude that
\eqref{f:BM} holds.
\qed

\subsection{Proof of Theorem \ref{t:log}(i)}
Let $v := - \log u$. In order to prove that $v$ is a convex function, we exploit the convex envelope method by Alvarez-Lasry-Lions. 
By definition, the convex envelope $v_{**}$ of $v$ satisfies $v_{**} \leq v$.
In order to show the converse inequality, we apply a comparison argument to the modified equation
\begin{equation}\label{f:eqmod}
G (\nabla v, D ^ 2 v ) = - \eigen(\Omega)
\end{equation}
settled on a suitable  level set $\Omega _\e := \{ u > \e  \}$.
To be more precise, 
the comparison principle given in \cite[Theorem 1.3]{LuWang2008} ensures that the inequality $v_{**} \geq v$ in $\Omega _\e$ holds true in $\Omega _\e$
(and hence in the limit as $\e \to 0 ^+$ also in $\Omega)$, provided the following two properties hold true:
\begin{itemize}
\item[(a)] $v_{**}$ is a viscosity super-solution to  \eqref{f:eqmod} in $\Omega _\e$;
 \item[(b)]  $v_{**} = v$ on $\partial \Omega _\e$.
\end{itemize}
We point out that we cannot take $\e = 0$ (namely work directly on $\Omega$) because $v \to + \infty$ on $\partial \Omega$.
We also stress that the assumption (H3)' intervenes in the proof of item~(b) given below, and this is the reason why the statement cannot be proved under the
weaker condition $X \mapsto F (\xi, X ^ { -1} )$ convex appearing in \cite{ALL}. 

\medskip
{\it Proof of (a).} Let us show that $v_{**}$ is a viscosity super-solution to \eqref{f:eqmod} in the whole $\Omega$.
  We observe that
\[
v_{**}  = \min  \Big \{ (v \sharp \cdots \sharp v)_{\blambda}  \ :\ \blambda \in \!\!\!\!\! \bigcup _{k \leq (n+1)}\!\!\!\!\!  \Lambda _k ^+ \Big \} \,.
\]
Thus, for some $t \in \Lambda_k ^+$ (depending on $x$), we have
\[
v_{**} (x) =  (v \sharp \cdots \sharp v)_{\blambda} (x) ,
\]
and hence, by applying Proposition \ref{p:concave} (with $\Omega _i = \Omega$ and $v_i = v$ for every $i$), we conclude that $v_{**}$ is a super-solution to \eqref{f:eqmod}. (As well, one could apply here Proposition 3 in \cite{ALL}). 

\medskip
{\it Proof of (b).}  
By Lemma 4 in \cite{ALL}, the required equality $v_{**} = v$ on 
$\partial \Omega _\e$ is satisfied provided the level set $\Omega _\e$ is
{\it convex}. We are thus reduced to prove the convexity of $\Omega_\e$ for $\e$ small enough.

We start by noticing that, by \cite[Proposition 3.5]{BirDem2010}, $v$ belongs to $C ^ {1, \beta} (\overline \Omega)$ (for some $\beta \in (0, 1)$). Combined with the Hopf boundary point principle given in \cite[Corollary 1]{BiDe2007b}, this ensures that 
 $|\nabla v | \geq \alpha >0$ in $\overline N$, where $N$ is an inner neighbourhood of $\partial \Omega$. 
This fact, and the strong convexity assumption made on $\Omega$, enable us to apply Lemma 2.4 in \cite{Kor}  (see also \cite[Proposition 3.2]{Sak}) to infer that the required convexity property of $\Omega _\e$ is satisfied, for $\e$ sufficiently small, as soon as we know that $u \in C ^ 2 (\overline {N})$.

The latter property follows by standard elliptic regularity, in particular
thanks to the convexity hypothesis made on $F$ and to the condition $\partial \Omega \in C ^ {2, \beta}$. So  we limit ourselves to give adequate references, along with a few additional comments.
By the convexity of $F$, we can apply the method of continuity  as done for instance in the proof of Theorem 9.7 in \cite{CC}.
There is just one point where we need to be careful when
following the proof of Theorem 9.7 in \cite{CC}:
since $F$ depends also on $\xi$,
 we cannot exploit the a priori estimates used therein (which are those given in Theorem 9.5 in \cite{CC}).  In place, we can invoke the a priori estimates given in \cite[Theorem 17.26]{GT}. These estimates are stated actually for more regular solutions, but this is not restrictive thanks to classical Schauder estimates, which hold in particular by  the  $C ^ {2, \beta}$ regularity of $\partial \Omega$ (see \cite[Section 6.4]{GT}). The relevant point is that the estimates in \cite[Theorem 17.26]{GT}  continue to hold for $F = F (\xi, X)$,  and enable us to conclude along the proof line of \cite[Theorem 9.7]{CC}.
As a drawback, we have to ask the convexity condition in the reinforced form (H3)', which is needed precisely to ensure the validity of condition (17.85) in \cite{GT}. 
\qed

\subsection{Proof of Theorem~\ref{t:log}(ii)}
Let $\Omega\in\clset$,
and let $\{u_k\}$ be the
approximating sequence given by Theorem~\ref{t:approx}.
We remark that the approximating smooth sets $\{\Omega_k\}$
can be chosen to be strongly convex.
Since, by Theorem~\ref{t:log}(i), every function $u_k$ is log-concave, then also their uniform limit $u$ is a log-concave positive eigenfunction.
\qed

\bigskip{\bf Acknowledgments.} We wish to thank Bernd Kawohl for some useful comments about the validity of claim (b) in
the proof of  Theorem \ref{t:log}, and Isabeau Birindelli for several interesting discussions.

\def\cprime{$'$}
\begin{bibdiv}
\begin{biblist}

\bib{ALL}{article}{
      author={Alvarez, {O.}},
      author={Lasry, {J.-M.}},
      author={Lions, {P.-L.}},
       title={Convex viscosity solutions and state constraints},
        date={1997},
        ISSN={0021-7824},
     journal={J. Math. Pures Appl. (9)},
      volume={76},
      number={3},
       pages={265\ndash 288},
         url={http://dx.doi.org/10.1016/S0021-7824(97)89952-7},
      review={\MR{1441987 (98k:35045)}},
}

\bib{arm}{article}{
      author={Armstrong, {S.N.}},
       title={Principal eigenvalues and an anti-maximum principle for
  homogeneous fully nonlinear elliptic equations},
        date={2009},
     journal={J. Differential Equations},
      volume={246},
      number={7},
       pages={2958\ndash 2987},
}

\bib{AtRu18}{article}{
      author={Attouchi, {A.}},
      author={Ruosteenoja, {E.}},
       title={Remarks on regularity for {$p$}-{L}aplacian type equations in
  non-divergence form},
        date={2018},
     journal={J. Differential Equations},
      volume={265},
      number={5},
       pages={1922\ndash 1961},
}

\bib{ball}{incollection}{
      author={Ball, {K.}},
       title={An elementary introduction to monotone transportation},
        date={2004},
   booktitle={Geometric aspects of functional analysis},
      series={Lecture Notes in Math.},
      volume={1850},
   publisher={Springer, Berlin},
       pages={41\ndash 52},
}

\bib{BK18}{article}{
      author={Banerjee, {A.}},
      author={Kawohl, {B.}},
       title={Overdetermined problems for the normalized {$p$}-{L}aplacian},
        date={2018},
     journal={Proc. Amer. Math. Soc. Ser. B},
      volume={5},
       pages={18\ndash 24},
}

\bib{BCPR}{article}{
      author={Berestycki, {H.}},
      author={Capuzzo~Dolcetta, {I.}},
      author={Porretta, {A.}},
      author={Rossi, {L.}},
       title={Maximum principle and generalized principal eigenvalue for
  degenerate elliptic operators},
        date={2015},
     journal={J. Math. Pures Appl. (9)},
      volume={103},
      number={5},
       pages={1276\ndash 1293},
}

\bib{BNV}{article}{
      author={Berestycki, {H.}},
      author={Nirenberg, {L.}},
      author={Varadhan, {S. R. S.}},
       title={The principal eigenvalue and maximum principle for second-order
  elliptic operators in general domains},
        date={1994},
     journal={Comm. Pure Appl. Math.},
      volume={47},
      number={1},
       pages={47\ndash 92},
}

\bib{BeRo}{article}{
      author={Berestycki, {H.}},
      author={Rossi, {L.}},
       title={Generalizations and properties of the principal eigenvalue of
  elliptic operators in unbounded domains},
        date={2015},
     journal={Comm. Pure Appl. Math.},
      volume={68},
      number={6},
       pages={1014\ndash 1065},
}

\bib{BS09}{article}{
      author={Bianchini, {C.}},
      author={Salani, {P.}},
       title={Concavity properties for elliptic free boundary problems},
        date={2009},
     journal={Nonlinear Anal.},
      volume={71},
      number={10},
       pages={4461\ndash 4470},
}

\bib{BS13}{incollection}{
      author={Bianchini, {M.}},
      author={Salani, {P.}},
       title={Power concavity for solutions of nonlinear elliptic problems in
  convex domains},
        date={2013},
   booktitle={Geometric properties for parabolic and elliptic {PDE}'s},
      series={Springer INdAM Ser.},
      volume={2},
   publisher={Springer, Milan},
       pages={35\ndash 48},
}

\bib{BiDe2004}{article}{
      author={Birindelli, {I.}},
      author={Demengel, {F.}},
       title={Comparison principle and {L}iouville type results for singular
  fully nonlinear operators},
        date={2004},
        ISSN={0240-2963},
     journal={Ann. Fac. Sci. Toulouse Math. (6)},
      volume={13},
      number={2},
       pages={261\ndash 287},
         url={http://afst.cedram.org/item?id=AFST_2004_6_13_2_261_0},
      review={\MR{2126744}},
}

\bib{BiDe2006}{article}{
      author={Birindelli, {I.}},
      author={Demengel, {F.}},
       title={First eigenvalue and maximum principle for fully nonlinear
  singular operators},
        date={2006},
     journal={Adv. Differential Equations},
      volume={11},
      number={1},
       pages={91\ndash 119},
}

\bib{BiDe2007}{article}{
      author={Birindelli, {I.}},
      author={Demengel, {F.}},
       title={The {D}irichlet problem for singular fully nonlinear operators},
        date={2007},
        ISSN={1078-0947},
     journal={Discrete Contin. Dyn. Syst.},
       pages={110\ndash 121},
}

\bib{BiDe2007b}{article}{
      author={Birindelli, {I.}},
      author={Demengel, {F.}},
       title={Eigenvalue, maximum principle and regularity for fully non linear
  homogeneous operators},
        date={2007},
        ISSN={1534-0392},
     journal={Commun. Pure Appl. Anal.},
      volume={6},
      number={2},
       pages={335\ndash 366},
         url={https://doi.org/10.3934/cpaa.2007.6.335},
      review={\MR{2289825}},
}

\bib{BirDem2010}{article}{
      author={Birindelli, {I.}},
      author={Demengel, {F.}},
       title={Regularity and uniqueness of the first eigenfunction for singular
  fully nonlinear operators},
        date={2010},
        ISSN={0022-0396},
     journal={J. Differential Equations},
      volume={249},
      number={5},
       pages={1089\ndash 1110},
         url={https://doi.org/10.1016/j.jde.2010.03.015},
      review={\MR{2652165}},
}

\bib{blaschke}{book}{
      author={Blaschke, {W.}},
       title={Kreis und {K}ugel},
   publisher={Walter de Gruyter \& Co., Berlin},
        date={1956},
        note={2te Aufl},
}

\bib{bonn}{book}{
      author={Bonnesen, {T.}},
       title={Les probl\`emes des isop\'erim\`etres et des is\'epiphanes},
   publisher={Gauthiers--Villars},
     address={Paris},
        date={1929},
}

\bib{bor2}{article}{
      author={Borell, {C.}},
       title={Capacitary inequalities of the {B}runn-{M}inkowski type},
        date={1983},
     journal={Math. Ann.},
      volume={263},
      number={2},
       pages={179\ndash 184},
}

\bib{bor3}{article}{
      author={Borell, {C.}},
       title={Hitting probabilities of killed {B}rownian motion: a study on
  geometric regularity},
        date={1984},
     journal={Ann. Sci. \'{E}cole Norm. Sup. (4)},
      volume={17},
      number={3},
       pages={451\ndash 467},
}

\bib{bor1}{article}{
      author={Borell, {C.}},
       title={Greenian potentials and concavity},
        date={1985},
     journal={Math. Ann.},
      volume={272},
      number={1},
       pages={155\ndash 160},
}

\bib{BrLi}{article}{
      author={Brascamp, {H.J.}},
      author={Lieb, {E.H.}},
       title={On extensions of the {B}runn-{M}inkowski and
  {P}r\'{e}kopa-{L}eindler theorems, including inequalities for log concave
  functions, and with an application to the diffusion equation},
        date={1976},
     journal={J. Functional Analysis},
      volume={22},
      number={4},
       pages={366\ndash 389},
}

\bib{brunn2}{book}{
      author={Brunn, {H.}},
       title={\"Uber curven ohne wendepunkte},
   publisher={Habilitationsschrift},
     address={Munchen},
        date={1887},
}

\bib{brunn1}{book}{
      author={Brunn, {H.}},
       title={\"Uber ovale und eiflachen},
   publisher={Dissertation,},
     address={Munchen},
        date={1887},
}

\bib{BEQ}{article}{
      author={Busca, {J.}},
      author={Esteban, {M.J.}},
      author={Quaas, {A.}},
       title={Nonlinear eigenvalues and bifurcation problems for {P}ucci's
  operators},
        date={2005},
     journal={Ann. Inst. H. Poincar\'{e} Anal. Non Lin\'{e}aire},
      volume={22},
      number={2},
       pages={187\ndash 206},
}

\bib{CC}{book}{
      author={Caffarelli, {L.A.}},
      author={Cabr\'{e}, {X.}},
       title={Fully nonlinear elliptic equations},
      series={American Mathematical Society Colloquium Publications},
   publisher={American Mathematical Society, Providence, RI},
        date={1995},
      volume={43},
}

\bib{CJL}{article}{
      author={Caffarelli, {L.A.}},
      author={Jerison, {D.}},
      author={Lieb, {E.H.}},
       title={On the case of equality in the {B}runn-{M}inkowski inequality for
  capacity},
        date={1996},
     journal={Adv. Math.},
      volume={117},
      number={2},
       pages={193\ndash 207},
}

\bib{CaSp}{article}{
      author={Caffarelli, {L.A.}},
      author={Spruck, {J.}},
       title={Convexity properties of solutions to some classical variational
  problems},
        date={1982},
     journal={Comm. Partial Differential Equations},
      volume={7},
      number={11},
       pages={1337\ndash 1379},
}

\bib{CaSi}{book}{
      author={Cannarsa, {P.}},
      author={Sinestrari, {C.}},
       title={Semiconcave functions, {H}amilton-{J}acobi equations and optimal
  control},
      series={Progress in Nonlinear Differential Equations and their
  Applications},
   publisher={Birkh\"auser},
     address={Boston},
        date={2004},
      volume={58},
}

\bib{CSW}{article}{
      author={Clarke, {F.H.}},
      author={Stern, {R.J.}},
      author={Wolenski, {P.R.}},
       title={Proximal smoothness and the lower-${C}^2$ property},
        date={1995},
     journal={J.\ Convex Anal.},
      volume={2},
       pages={117\ndash 144},
}

\bib{Col2005}{article}{
      author={Colesanti, {A.}},
       title={Brunn-{M}inkowski inequalities for variational functionals and
  related problems},
        date={2005},
        ISSN={0001-8708},
     journal={Adv. Math.},
      volume={194},
      number={1},
       pages={105\ndash 140},
         url={https://doi.org/10.1016/j.aim.2004.06.002},
      review={\MR{2141856}},
}

\bib{CoCu}{article}{
      author={Colesanti, {A.}},
      author={Cuoghi, {P.}},
       title={The {B}runn-{M}inkowski inequality for the {$n$}-dimensional
  logarithmic capacity of convex bodies},
        date={2005},
     journal={Potential Anal.},
      volume={22},
      number={3},
       pages={289\ndash 304},
}

\bib{CoCuSa}{article}{
      author={Colesanti, {A.}},
      author={Cuoghi, {P.}},
      author={Salani, {P.}},
       title={Brunn-{M}inkowski inequalities for two functionals involving the
  {$p$}-{L}aplace operator},
        date={2006},
     journal={Appl. Anal.},
      volume={85},
}

\bib{CoSa}{article}{
      author={Colesanti, {A.}},
      author={Salani, {P.}},
       title={The {B}runn-{M}inkowski inequality for {$p$}-capacity of convex
  bodies},
        date={2003},
     journal={Math. Ann.},
      volume={327},
      number={3},
       pages={459\ndash 479},
}

\bib{CoTh}{incollection}{
      author={Colombo, {G.}},
      author={Thibault, {L.}},
       title={Prox-regular sets and applications},
        date={2010},
   booktitle={Handbook of nonconvex analysis and applications},
   publisher={Int. Press, Somerville, MA},
       pages={99\ndash 182},
}

\bib{CIL}{article}{
      author={Crandall, {M.G.}},
      author={Ishii, {H.}},
      author={Lions, {P.L.}},
       title={User's guide to viscosity solutions of second order partial
  differential equations},
        date={1992},
     journal={Bull. Amer. Math. Soc. (N.S.)},
      volume={27},
       pages={1\ndash 67},
}

\bib{CFb}{article}{
      author={Crasta, {G.}},
      author={Fragal\`a, {I.}},
       title={On the characterization of some classes of proximally smooth
  sets},
        date={2016},
     journal={ESAIM Control Optim.\ Calc.\ Var.},
      volume={22},
      number={3},
       pages={710\ndash 727},
         url={http://dx.doi.org/10.1051/cocv/2015022},
}

\bib{CFK}{misc}{
      author={Crasta, {G.}},
      author={Fragal\`a, {I.}},
      author={Kawohl, {B.}},
       title={On the first eigenvalue of the normalized $p$-Laplacian},
        date={(2018)},
        note={preprint arXiv:1811.10024},
}

\bib{DFQ10}{article}{
      author={D\'{a}vila, {G.}},
      author={Felmer, {P.}},
      author={Quaas, {A.}},
       title={Harnack inequality for singular fully nonlinear operators and
  some existence results},
        date={2010},
     journal={Calc. Var. Partial Differential Equations},
      volume={39},
      number={3-4},
       pages={557\ndash 578},
}

\bib{ding}{article}{
      author={Dinghas, {A.}},
       title={\"Uber eine klasse superadditiver mengenfunktionale von {B}runn–{M}inkowski–{L}usternikschem typus},
        date={1957},
     journal={Math. Z.},
      volume={69},
       pages={111\ndash 125},
}

\bib{gard}{article}{
      author={Gardner, {R.J.}},
       title={The {B}runn-{M}inkowski inequality},
        date={2002},
     journal={Bull. Amer. Math. Soc. (N.S.)},
      volume={39},
      number={3},
       pages={355\ndash 405},
}

\bib{GT}{book}{
      author={Gilbarg, {D.}},
      author={Trudinger, {N.S.}},
       title={Elliptic partial differential equations of second order},
   publisher={Springer-Verlag},
     address={Berlin},
        date={1977},
}

\bib{had}{book}{
      author={Hadwiger, {H.}},
       title={Vorlesungen \"uber Inhalt, Oberfl\"ache und Isoperimetrie},
   publisher={Springer},
     address={Berlin},
        date={1957},
}

\bib{hadoh}{article}{
      author={Hadwiger, {H.}},
      author={Ohmann, {D.}},
       title={Brunn-{M}inkowskischer {S}atz und {I}soperimetrie},
        date={1956},
     journal={Math. Z.},
      volume={66},
       pages={1\ndash 8},
}

\bib{hilbert}{book}{
      author={Hilbert, {D.}},
       title={Minkowskis theorie von volumen und oberfl\"ache},
   publisher={Nachr. Ges. Wiss. G\"ottingen},
        date={1910},
}

\bib{II12}{article}{
      author={Ikoma, {N.}},
      author={Ishii, {H.}},
       title={Eigenvalue problem for fully nonlinear second-order elliptic
  {PDE} on balls},
        date={2012},
     journal={Ann. Inst. H. Poincar\'{e} Anal. Non Lin\'{e}aire},
      volume={29},
      number={5},
       pages={783\ndash 812},
}

\bib{II15}{article}{
      author={Ikoma, {N.}},
      author={Ishii, {H.}},
       title={Eigenvalue problem for fully nonlinear second-order elliptic
  {PDE} on balls, {II}},
        date={2015},
     journal={Bull. Math. Sci.},
      volume={5},
      number={3},
       pages={451\ndash 510},
}

\bib{IS16}{article}{
      author={Ishige, {K.}},
      author={Salani, {P.}},
       title={Parabolic {M}inkowski convolutions of solutions to parabolic
  boundary value problems},
        date={2016},
     journal={Adv. Math.},
      volume={287},
       pages={640\ndash 673},
}

\bib{Juut07}{article}{
      author={Juutinen, {P.}},
       title={Principal eigenvalue of a very badly degenerate operator and
  applications},
        date={2007},
     journal={J. Differential Equations},
      volume={236},
      number={2},
       pages={532\ndash 550},
}

\bib{K0}{incollection}{
      author={Kawohl, {B.}},
       title={Variational versus {PDE}-based approaches in mathematical image
  processing},
        date={2008},
   booktitle={Singularities in {PDE} and the calculus of variations},
      series={CRM Proc. Lecture Notes},
      volume={44},
   publisher={Amer. Math. Soc., Providence, RI},
       pages={113\ndash 126},
      review={\MR{2528737}},
}

\bib{K11}{incollection}{
      author={Kawohl, {B.}},
       title={Variations on the {$p$}-{L}aplacian},
        date={2011},
   booktitle={Nonlinear elliptic partial differential equations},
      series={Contemp. Math.},
      volume={540},
   publisher={Amer. Math. Soc., Providence, RI},
       pages={35\ndash 46},
}

\bib{KH}{article}{
      author={Kawohl, {B.}},
      author={Hor\'{a}k, {J.}},
       title={On the geometry of the {$p$}-{L}aplacian operator},
        date={2017},
        ISSN={1937-1632},
     journal={Discrete Contin. Dyn. Syst. Ser. S},
      volume={10},
      number={4},
       pages={799\ndash 813},
         url={https://doi.org/10.3934/dcdss.2017040},
      review={\MR{3640538}},
}

\bib{KKK}{article}{
      author={Kawohl, {B.}},
      author={Kr\"{o}mer, {S.}},
      author={Kurtz, {J.}},
       title={Radial eigenfunctions for the game-theoretic {$p$}-{L}aplacian on
  a ball},
        date={2014},
     journal={Differential Integral Equations},
      volume={27},
      number={7-8},
       pages={659\ndash 670},
}

\bib{klain}{article}{
      author={Klain, {D. A.}},
       title={On the equality conditions of the {B}runn-{M}inkowski theorem},
        date={2011},
     journal={Proc. Amer. Math. Soc.},
      volume={139},
      number={10},
       pages={3719\ndash 3726},
}

\bib{knesuss}{article}{
      author={Kneser, {H.}},
      author={Suss, {W.}},
       title={Die Volumina in Linearen Scharen Konvexer K\"orper},
        date={1935},
     journal={Mat.Tidsskr. B},
       pages={19\ndash 25},
}

\bib{knothe}{article}{
      author={Knothe, {H.}},
       title={Contributions to the theory of convex bodies},
        date={1957},
     journal={Michigan Math. J.},
      volume={4},
       pages={39\ndash 52},
}

\bib{Kor}{article}{
      author={Korevaar, {N.J.}},
       title={Convex solutions to nonlinear elliptic and parabolic boundary
  value problems},
        date={1983},
     journal={Indiana Univ. Math. J.},
      volume={32},
      number={4},
       pages={603\ndash 614},
}

\bib{kuhn}{article}{
      author={K\"{u}hn, {M.}},
       title={Power- and log-concavity of viscosity solutions to some elliptic
  {D}irichlet problems},
        date={2018},
     journal={Commun. Pure Appl. Anal.},
      volume={17},
      number={6},
       pages={2773\ndash 2788},
}

\bib{LiMaXu}{article}{
      author={Liu, {P.}},
      author={Ma, {X.N.}},
      author={{L.}, Xu},
       title={A {B}runn-{M}inkowski inequality for the {H}essian eigenvalue in
  three-dimensional convex domain},
        date={2010},
     journal={Adv. Math.},
      volume={225},
      number={3},
       pages={1616\ndash 1633},
}

\bib{LuWang2008}{incollection}{
      author={Lu, {G.}},
      author={Wang, {P.}},
       title={A uniqueness theorem for degenerate elliptic equations},
        date={2008},
   booktitle={Geometric methods in {PDE}'s},
      series={Lect. Notes Semin. Interdiscip. Mat.},
      volume={7},
   publisher={Semin. Interdiscip. Mat. (S.I.M.), Potenza},
       pages={207\ndash 222},
      review={\MR{2605157}},
}

\bib{mccann}{article}{
      author={McCann, {R.J.}},
       title={A convexity principle for interacting gases},
        date={1997},
     journal={Adv. Math.},
      volume={128},
      number={1},
       pages={153\ndash 179},
}

\bib{mullen}{article}{
      author={McMullen, {P.}},
       title={The {B}runn–{M}inkowski theorem for polytopes},
        date={2002},
     journal={Rend. Circ. Mat. Palermo},
      volume={2},
       pages={139\ndash 150, Suppl. 70, Part II},
}

\bib{mink}{book}{
      author={Minkowski, {H.}},
       title={Geometrie der zahlen},
   publisher={Teubner},
     address={Leipzig},
        date={1910},
}

\bib{QS}{article}{
      author={Quaas, {A.}},
      author={Sirakov, {B.}},
       title={Principal eigenvalues and the {D}irichlet problem for fully
  nonlinear elliptic operators},
        date={2008},
     journal={Adv. Math.},
      volume={218},
      number={1},
       pages={105\ndash 135},
}

\bib{Sak}{article}{
      author={Sakaguchi, {S.}},
       title={Concavity properties of solutions to some degenerate quasilinear
  elliptic {D}irichlet problems},
        date={1987},
        ISSN={0391-173X},
     journal={Ann. Scuola Norm. Sup. Pisa Cl. Sci. (4)},
      volume={14},
      number={3},
       pages={403\ndash 421 (1988)},
         url={http://www.numdam.org/item?id=ASNSP_1987_4_14_3_403_0},
      review={\MR{951227 (89h:35133)}},
}

\bib{Sal1}{article}{
      author={Salani, {P.}},
       title={A {B}runn-{M}inkowski inequality for the {M}onge-{A}mp\`ere
  eigenvalue},
        date={2005},
     journal={Adv. Math.},
      volume={194},
      number={1},
       pages={67\ndash 86},
}

\bib{Sal2}{article}{
      author={Salani, {P.}},
       title={Convexity of solutions and {B}runn-{M}inkowski inequalities for
  {H}essian equations in {$\mathbb{R}^3$}},
        date={2012},
     journal={Adv. Math.},
      volume={229},
      number={3},
       pages={1924\ndash 1948},
}

\bib{SalAIHP}{article}{
      author={Salani, {P.}},
       title={Combination and mean width rearrangements of solutions to
  elliptic equations in convex sets},
        date={2015},
        ISSN={0294-1449},
     journal={Ann. Inst. H. Poincar\'{e} Anal. Non Lin\'{e}aire},
      volume={32},
      number={4},
       pages={763\ndash 783},
         url={https://doi.org/10.1016/j.anihpc.2014.04.001},
      review={\MR{3390083}},
}

\bib{Sch}{book}{
      author={Schneider, {R.}},
       title={Convex bodies: the {B}runn--{M}inkowski theory},
   publisher={Cambridge Univ.~Press},
     address={Cambridge},
        date={1993},
}

\bib{Strom1996}{article}{
      author={Str\"{o}mberg, {T.}},
       title={The operation of infimal convolution},
        date={1996},
        ISSN={0012-3862},
     journal={Dissertationes Math. (Rozprawy Mat.)},
      volume={352},
       pages={58},
      review={\MR{1387951}},
}

\end{biblist}
\end{bibdiv}

\end{document}